\newcommand{\C}{\mathscr{C}}
\newtheorem{defi}{Definition}
\newtheorem{teo}{Theorem}
\newtheorem{prop}{Proposition}
\newtheorem{lema}{Lemma}
\newtheorem{coro}{Corollary}
\newtheorem{obser}{Remark}
\begin{document}
\title{Bikernels by monochromatic paths}

\author{Dennis J. Diaz-Diaz\footnote{Centro de Investigaci\'on en Matem\'aticas, dennis.diaz@cimat.mx}, Isa\'ias F. de-la-Fuente-Jimenez\footnote{Universidad de Guanajuato, isais.delafuente@cimat.mx} \\ Teresa I. Hoekstra-Mendoza\footnote{Centro de Investigaci\'on en Matem\'aticas, maria.idskjen@cimat.mx}, Miguel E. Licona-Vel\'azquez\footnote{Universidad Aut\'onoma Matropolitana Iztapalapa, eliconav23@xanum.uam.mx}, \\ Victoria Terrones-Segura\footnote{Universidad de Guanajuato, victoria.terrones@cimat.mx}}

\date{\empty}

\maketitle
\begin{abstract}
    In this paper, we introduce the concept of bikernel by monochromatic paths of a bicolored digraph. This concept is strongly motivated by the existing notions of kernels, kernels by monochromatic paths, and double stable augmented categories.  We establish sufficient and necessary conditions for several families of bicolored digraphs to have a bikernel by monochromatic paths. Also, we characterize bicolored digraphs without monochromatic cycles that possess a bikernel by monochromatic paths. Similarly, we characterize bicolored digraphs with monochromatic cycles that also have a bikernel by monochromatic paths. Furthermore, we prove sufficient and necessary conditions for some families of digraphs to be $BK$-colorable. This means that a bicoloration of the digraph exists where the resulting bicolored digraph has a bikernel.
    %to have a bikernel by monochromatic paths, as well as for certain products of digraphs.
\end{abstract}
%\ddnote{Lo de BK-coloreable lo hicimos para familias de digraficas especificas, no lo hicimos en general}\mlnote{ya lo cambie}

\section*{Introduction}
For general concepts we refer the reader to \cite{CL}.
Let $D$ be a digraph. A subset $N$ of $V(D)$ is said to be a \textit{kernel} if it is both independent (there are no arcs between any pair of vertices in $N)$ and absorbent (for all $u\in V(D)\setminus N$ there exists $v$ in $N$ such that $(u,v)\in A(D)).$ The concept of kernel was introduced in \cite{VNM} by von Newmann and Morgenstern in the context of Game Theory as a solution for cooperative $n$-players games. Kernels have been studied by several authors, see for example, \cite{HV}, \cite{Matus}, \cite{BG} and \cite{RV}. Chvátal proved in \cite{CV} that recognizing digraphs that have a kernel is an NP-complete problem, so finding sufficient conditions for a digraph to have a kernel or finding large families of digraphs with kernels are two lines of investigation widely studied by many authors. 

A digraph $D$ is said to be \textit{$m$-colored} if the arcs of $D$ are colored with $m$ colors. Let $D$ be an $m$-colored digraph, a path in $D$ is called \textit{monochromatic} if all of its arcs are colored alike. For an arc $(u,v)$ of $D$ we will denote its color by $c_{D}(u,v).$

Let $D$ be an $m$-colored digraph, a subset $K$ of $V(D)$ is said to be a \textit{kernel by monochromatic paths}  if it satisfies the following conditions: (1) there are no monochromatic paths between every pair of vertices in $K$ ($K$ is independent by monochromatic paths) and (2) for every vertex $x\in V(D)\setminus K$ there is an $xK$-monochromatic path ($K$ is absorbent by monochromatic paths). The notion of kernel by monochromatic paths was studied first in \cite{SSW} by Sands, Sauer and Woodrow. The concept of kernel by monochromatic paths generalizes the concept of a kernel since a kernel by monochromatic paths is a kernel when all the monochromatic paths have length one.

Inspired by the concepts of kernel, kernel by monochromatic paths, we will introduce the concept of bikernel by monochromatic paths for bicolored digraphs as follows. 

Given an arc-colored digraph $D$, we say that $D$ is a bicolored digraph if its arcs are colored with colors $1$ and $2$. Denote by $A_i(D)$ the set of arcs which have color $i.$ %\thnote{Me gusto esta notacion, tal vez podemos usarla en todo}
\begin{defi}\label{bikernel}
Let $G$ be a bicolored digraph. We say that a non-empty subset $B\subseteq V(G)$ is a bikernel by monochromatic paths if the following conditions hold
\begin{itemize}
    \item $B$ is independent by monochromatic paths, i.e. for every $u,v\in B$ with $u\neq v$, there does not exist a monochromatic $uv$-path,
    \item $B$ is absorbent with color 1, i.e. for every vertex $v \in V(G)\setminus B$ there exists a monochromatic path from $v$ to a vertex $x \in B$ which has color 1,
    \item $B$ is dominant with color 2, i.e. for every vertex $v \in V(G)\setminus B$ there exists a monochromatic path from a vertex $x \in B$ to $v$ which has color 2.
\end{itemize}
\end{defi}

Figure \ref{1} is an example of a digraph with bikernel where blue arcs have color 1 and red ones have color 2. Notice that the set of the black vertices are the bikernel of this graph. 

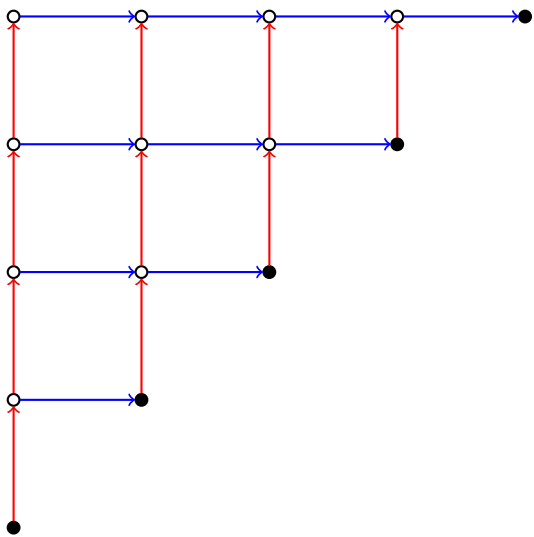
\begin{figure}[h!]
    \centering
\begin{tikzpicture}[scale=1.7,thick]
    \node[draw, circle, scale=.4, fill=black] (12) at (0,0){};
    \node[draw, circle, scale=.4] (13) at (0,1){};
    \node[draw, circle, scale=.4] (14) at (0,2){};
    \node[draw, circle, scale=.4] (15) at (0,3){};
    \node[draw, circle, scale=.4] (16) at (0,4){};
    \node[draw, circle, scale=.4, fill=black] (23) at (1,1){};
    \node[draw, circle, scale=.4] (24) at (1,2){};
    \node[draw, circle, scale=.4] (25) at (1,3){};
    \node[draw, circle, scale=.4] (26) at (1,4){};
    \node[draw, circle, scale=.4, fill=black] (34) at (2,2){};
    \node[draw, circle, scale=.4] (35) at (2,3){};
    \node[draw, circle, scale=.4] (36) at (2,4){};
    \node[draw, circle, scale=.4, fill=black] (45) at (3,3){};
    \node[draw, circle, scale=.4] (46) at (3,4){};
    \node[draw, circle, scale=.4, fill=black] (56) at (4,4){};

    \draw[->, red] (12)--(13);
    \draw[->, red] (13)--(14);
    \draw[->, red] (14)--(15);
    \draw[->, red] (15)--(16);
    \draw[->, red] (23)--(24);
    \draw[->, red] (24)to(25);
    \draw[->, red] (25)to(26);
    \draw[->, red] (34)to(35);
    \draw[->, red] (35)to(36);
    \draw[->, red] (45)to(46);
    \draw[->, blue] (13)--(23);
    \draw[->, blue] (14)--(24);
    \draw[->, blue] (24)--(34);
    \draw[->, blue] (15)--(25);
    \draw[->, blue] (25)--(35);
    \draw[->, blue] (35)--(45);
    \draw[->, blue] (16)--(26);
    \draw[->, blue] (26)--(36);
    \draw[->, blue] (36)--(46);
    \draw[->, blue] (46)--(56);
    
\end{tikzpicture}
    \caption{A digraph with bikernel the black vertices.}
    \label{1}
\end{figure}
%\ddnote{Falta decir que en la figura 1 el color 1 es azul y el rojo es el 2, y al menos mencionar la figura 1}\mlnote{listo.}

\newpage
As seen in Figure \ref{corazon}, the circulatory system can be represented as a digraph. Each organ is a vertex of the digraph and an arc connects two vertices if there is a vein or artery between the corresponding organs. Arteries are colored blue (color 1) and veins red (color 2). Consequently, the heart vertex is a bikernel for this digraph. The concept of a bikernel can model real-world situations, suggesting potential applications in various scientific fields.
\begin{figure}[h!]
    \centering
    \includegraphics[scale=0.5]{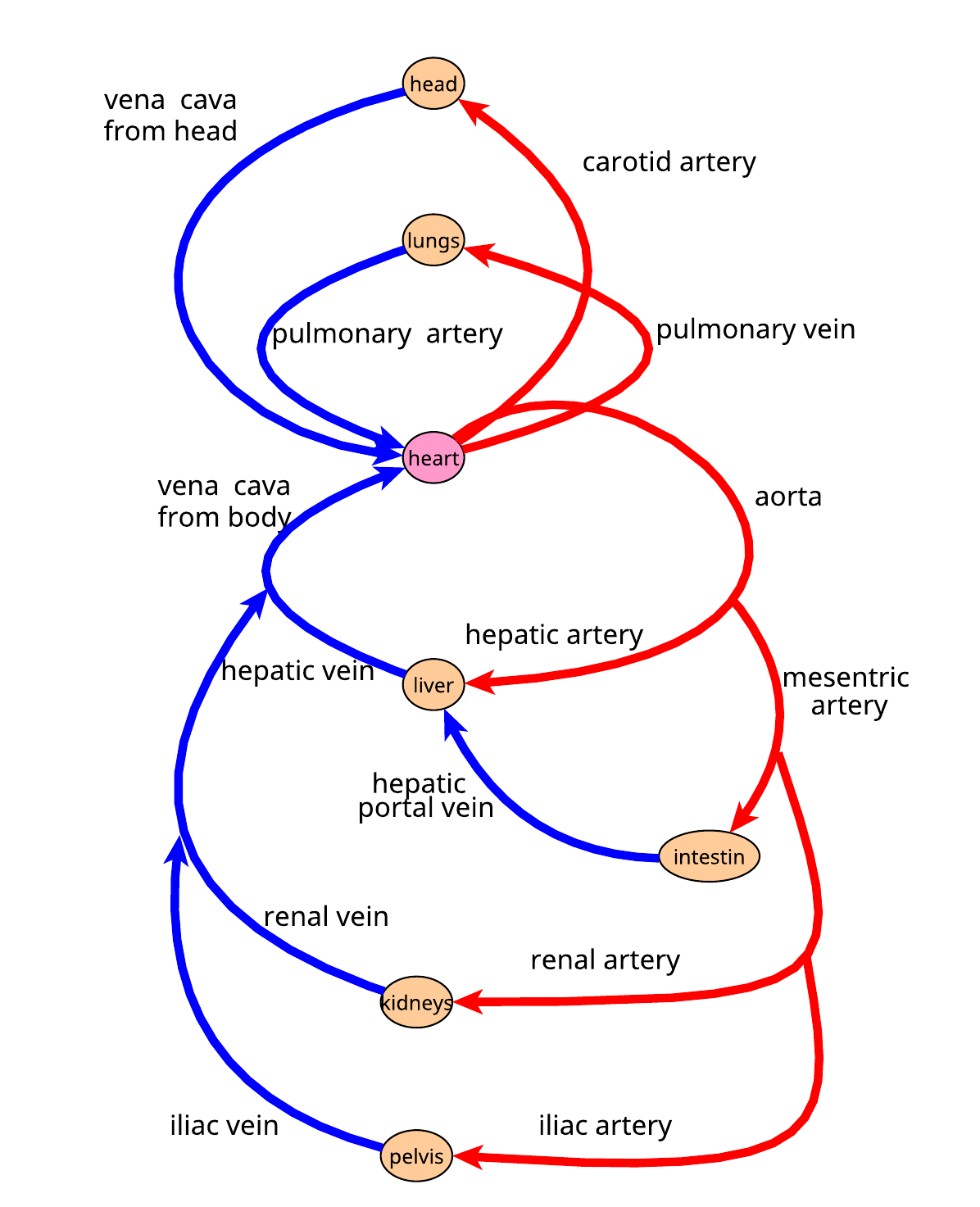}
    \caption{The heart as a bikernel of the blood circulatory system.}
    \label{corazon}
\end{figure}

On the other hand, the concept of bikernel is closely related to that of kernel by monochromatic paths. As seen in Definition \ref{bikernel}, the condition of independence by monochromatic paths is the same for both.  The difference lies in two key properties: a kernel by monochromatic paths is absorbent regardless of the path's color, while a bikernel absorbs by monochromatic paths only at a specific color. Furthermore, a bikernel is dominant by monochromatic paths sets, also at a specific color.

In \cite{SSW}, Sands, Sauer, and Woodrow proved that any $2$-colored (bicolored) digraph has a kernel by monochromatic paths. However, this statement does not hold for bikernels. For example, consider Figure \ref{sinbikernel}, where the white vertex in the digraph $D$ is a kernel by monochromatic paths. Since there is an arc between every pair of vertices, a bikernel for $D$ would also have only one vertex. However, it is easy to see that no vertex exists that absorbs by monochromatic paths in one color and dominates by monochromatic paths the remaining vertices in the other color.

\begin{figure}[h!]
    \centering
    \includegraphics[width=0.4\linewidth]{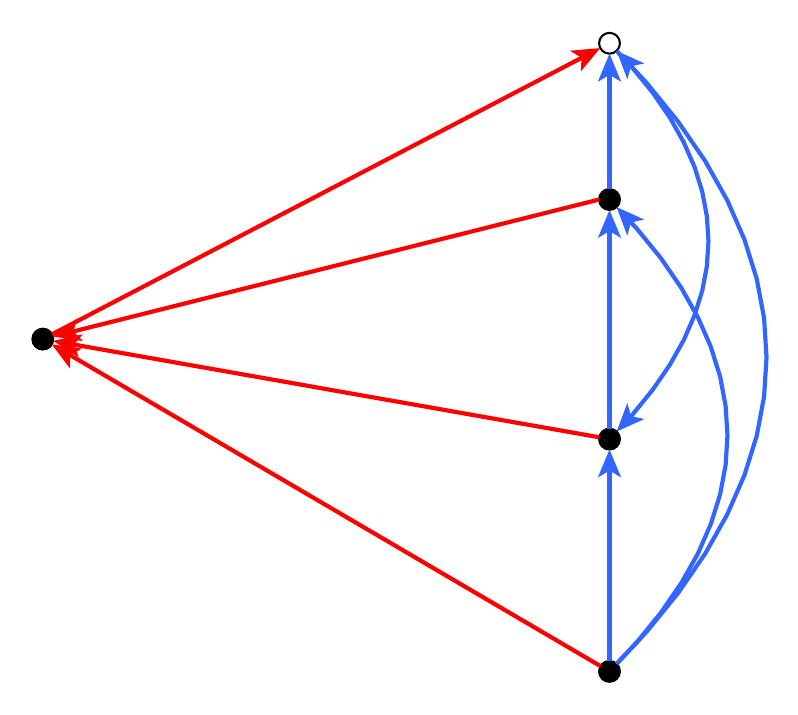}
    \caption{A bicolored digraph $D$ with a kernel by monochromatic paths but without a bikernel.}
    \label{sinbikernel}
\end{figure}

%the absorbency by monochromatic paths, since for bikernels, the allowed monochromatic paths have a specific color. Also we add an additional property, . 

This paper is organized as follows. In Section \ref{Sec1} we work with bicolored paths and cycles, we provide necessary and sufficient conditions on the coloring to ensure the existence of a bikernel by monochromatic paths. Also, in this section, we work with the Cartesian product. A digraph (graph) product \cite{HIK} constitutes a binary operation on digraphs. %It involves taking two or more digraphs, denoted as $D_{1} = (V(D_{1}),A(D_{1}))$ and $D_2 = (V(D_{2}),A(D_{2}))$ and combining them to form a new graph $D'$ with specific characteristics.
We consider the Cartesian product between paths and between cycles. In both cases, we provide necessary and sufficient conditions to ensure the existence of a bikernel by monochromatic paths. 

Richardson theorem \cite{R} states that every digraph without a directed odd cycle has a kernel. Later, Galeana-Sánchez and Sánchez-López in \cite{HR} proved that given $D$ be a finite $m$-coloured digraph and $\mathcal{C}(D)$ its color-class digraph. If $\mathcal{C}(D)$ has no cycles of odd length at least 3, then $D$ has a kernel by monochromatic paths. They also proved that Richardson’s Theorem is a direct consequence of this result. Following this line of investigation, in Section \ref{caracterizacion} we prove that, given $G$ be a bicolored digraph that does not contain monochromatic directed cycles,  $G$ has a unique bikernel if and only if every critical vertex in $G$ is a supercritical vertex. Also we prove that, if $D$ is a digraph without directed cycles, then $D$ has a $BK$-coloration if and only if $D$ has star-decomposition. On the other hand, %in Section \ref{Sclasesciclicas}
we also give a characterization of the digraphs having a bikernel in terms of the cyclic classes of the digraph, more specific we show that: let $G$ be a bicolored digraph, $C_1$ and $C_2$ its 1-cyclic classes and 2-cyclic classes and $M_1$, $M_2$ the set of maximal classes in $C_1$ and minimal classes in $C_2$ respectively. Then $G$ has a bikernel if and only there exist a bijection $f:M_1\longrightarrow M_2$ such that $f(m_1)\cap m_1\neq\emptyset$ for all $m_1\in M_1$.  

In Section \ref{Scicloconcuerda}, we investigate bicolored cycles with a chord. We establish necessary and sufficient conditions on both the structure of the digraph and the coloring to guarantee the existence of a bikernel by monochromatic paths.

Category theory was introduced in 1945 by Samuel Eilenberg and Saunders Mac Lane in their foundational work on algebraic topology. A category $\mathscr{C}$ is a pair $(\mathcal{O}b (\mathscr{C}), \mathcal{M}or (\mathscr{C}))$ where $\mathcal{O}b (\mathscr{C})$ is a class called the objects of $\C$ and a class $\mathscr{M}or (\C) $ of elements of the form $f:X \rightarrow Y$ with $X,Y \in \mathcal{O}b(\C)$ called morphism or arrows that have some properties (see Definition \ref{deficat}).  A category is called \textit{small} if the class of objects is a set. If we consider only small categories, one can see a category as a multidigraph which is transitive and has every loop.  On the other hand, a double category is an internal category in $\mathcal{C}at$, the category of all categories. So, a small double category could be seen as a bicolored transitive and reflexive multidigraph (see Definition \ref{doblesmall}). In Section \ref{cat}  and we prove that if $\mathscr{D}$ is an augmented double category, then $\mathscr{D}$ has a bikernel when viewed as a bicolored digraph. It is important to mention that the notion of a bikernel was also highly motivated by the concept of a double stable augmented category, as we shall see in this section. 
%Having defined this we state the following problems
%\begin{enumerate}
 %   \item If we have a bicolored digraph, does it have a bikernel? Which are sufficient and necessary conditions for it to have a bikernel?
 %   \item If we have a digraph $D$. Does there exist a coloration $c$ of it's arc in such a way that $(D,c)$ has a bikernel? Which are necessary and sufficient conditions for the digraph $D$ so that this coloration exist?
%\end{enumerate}

\section{Main Results}
The following definitions and result will be useful throughout this paper.
\begin{defi}
Let $G$ be a bicolored digraph. We define the next terms associated to the vertices of the digraph
\begin{itemize}
    \item The $i$-out-degree of a vertex, is given by
    $\delta_{i}^+(v)=|\{ (v,x)\in A(G):(v,x) \in A_i(D)\}|$.
    \item The $i$-in-degree of a vertex, is given by
    $\delta_{i}^-(v)=|\{ (x,v)\in A(G):(x,v) \in A_i(D)\}|$.
    \item The out-degree of a vertex, is given by
    $\delta^+(v)=\delta_1^+(v)+\delta_2^+(v)$.
    \item The in-degree of a vertex, is given by
    $\delta^-(v)=\delta_1^-(v)+\delta_2^-(v)$.
    \item The $i$-degree of a vertex, is given by
    $\delta_{i}(v)=\delta_{i}^+(v)+\delta_{i}^-(v)$.
   
\end{itemize}
\end{defi}

When a vertex has some of the last degrees equal to zero, they receive the following special names.

\begin{defi}
    Let $v$ be a vertex of $G$, if $\delta^+(v)=0$ we say that $v$ is a sink, if $\delta^-(v)=0$ we say that $v$ is a source. Also if $\delta_1^+(v)=0$ we say that $v$ is a 1-sink and if $\delta_2^-(v)=0$ we say that $v$ is a 2-source.
\end{defi}
%\ddnote{Voy introduciendo las definiciones de 1-pozo y 2-fuente.}

\begin{lema}\label{d}
    Let $G$ be a digraph whose arcs are colored with two colors: color 1 and color 2. If $G$ has a bikernel $B\subseteq V(G)$, then the following are sufficient conditions for a vertex $v$ to be an element of the bikernel $B$.
    \begin{enumerate}
        \item The vertex $v$ is a 1-sink, this is, $\delta_1^+(v)=0.$
        \item The vertex $v$ is a 2-source, this is, $\delta_2^-(v)=0.$
        \item The vertex $v$ has out-degree, in-degree, 1-degree or 2-degree equal to zero.
    \end{enumerate}
\end{lema}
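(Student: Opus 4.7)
The plan is to prove each of the three items by contradiction, exploiting the two absorbent/dominant conditions in Definition \ref{bikernel}. The key observation is that any monochromatic path of color $1$ that absorbs $v$ into $B$ must begin with a color-$1$ arc leaving $v$, and dually any monochromatic path of color $2$ dominating $v$ from $B$ must end with a color-$2$ arc entering $v$. So forbidding such arcs at $v$ forces $v\in B$.

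For item (1), I would suppose $v\notin B$. Since $B$ is absorbent with color $1$, there is a color-$1$ monochromatic path from $v$ to some $x\in B$. This path has length at least one (since $v\neq x$), so its first arc is an arc $(v,w)\in A_1(G)$, contradicting $\delta_{1}^{+}(v)=0$. Hence $v\in B$. Item (2) is symmetric: assuming $v\notin B$, the dominance-by-color-$2$ property yields a color-$2$ monochromatic path from some $x\in B$ to $v$, whose last arc lies in $A_2(G)$ and enters $v$, contradicting $\delta_{2}^{-}(v)=0$.

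For item (3), I would simply reduce each subcase to (1) or (2) via the defining identities
\[
\delta^{+}(v)=\delta_{1}^{+}(v)+\delta_{2}^{+}(v),\qquad \delta^{-}(v)=\delta_{1}^{-}(v)+\delta_{2}^{-}(v),
\]
\[
\delta_{1}(v)=\delta_{1}^{+}(v)+\delta_{1}^{-}(v),\qquad \delta_{2}(v)=\delta_{2}^{+}(v)+\delta_{2}^{-}(v).
\]
If $\delta^{+}(v)=0$ or $\delta_{1}(v)=0$, then in particular $\delta_{1}^{+}(v)=0$, and (1) applies. If $\delta^{-}(v)=0$ or $\delta_{2}(v)=0$, then $\delta_{2}^{-}(v)=0$, and (2) applies.

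There is no real obstacle here; the argument is essentially a direct unpacking of Definition \ref{bikernel}. The only subtlety worth flagging is that one must use that a monochromatic $vx$-path (respectively $xv$-path) between distinct vertices has length at least one, so that a first (respectively last) arc of the prescribed color is guaranteed to exist.
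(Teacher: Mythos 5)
Your proposal is correct and follows essentially the same route as the paper: items (1) and (2) by contradiction from the absorbent-with-color-1 and dominant-with-color-2 conditions, and item (3) by reducing each subcase to a 1-sink or a 2-source via the degree identities. No issues.
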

\begin{proof}
    \begin{enumerate}
        \item Suppose $v\not\in B$, then there must exist a monochromatic path from $v$ to a vertex $b\in B$ of color 1, but this is impossible since $v$ has no outward arcs of color 1. Then $v\in B.$
        \item Suppose $v\not\in B$, then there must exist a monochromatic path from a vertex $b\in B$ to $v$ of color 2, but this is impossible since $v$ has no inward arcs of color 2. Then $v\in B.$
        \item If $v$ has out-degree or 1-degree equal to zero, then it is a 1-sink, and if $v$ has in-degree or 2-degree equal to zero, then it is a 2-source. In both cases, it follows that $v\in B.$
    \end{enumerate}
\end{proof}

If a vertex $v$ of $G$ is a 1-sink or a 2-source we say that $v$ is a critical vertex. From the last lemma we know that every critical vertex must be an element of the bikernel.
\subsection{Paths, cycles and products }\label{Sec1} %and token graphs}
\begin{prop}\label{traybi}
    Let $P_n$ be a directed bicolored path. Then $P_n$ has a bikernel if and only if the following hold:
    \begin{itemize}
        \item[-]  $P_n$ does not contain monochromatic paths of three or more vertices, as induced subdigraphs,
        \item[-] $n$ is odd, and  for $z\in V(P_{n})$ such that $\delta^+(z)=0,$ $c(v,z)=1.$
    \end{itemize}
   
\end{prop}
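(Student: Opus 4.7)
The plan is to label the vertices of $P_n$ as $v_1 \to v_2 \to \cdots \to v_n$ with arcs $a_i = (v_i, v_{i+1})$ of color $c_i \in \{1,2\}$. For the sufficiency direction, I first observe that the two hypotheses together force the arcs to alternate strictly in color, starting with $c_1 = 2$ and ending with $c_{n-1} = 1$ (the starting color is fixed once we know $n-1$ is even). I then propose $B = \{v_1, v_3, \ldots, v_n\}$ and verify the three bikernel axioms directly: independence holds because any subpath between two odd-indexed vertices has at least two arcs of alternating colors; each $v_{2k}$ is absorbed by color $1$ via the arc $a_{2k}$; and each $v_{2k}$ is dominated by color $2$ via $a_{2k-1}$. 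This direction is essentially a verification once the coloring is pinned down.

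For necessity, suppose $B$ is a bikernel and list $B \cap V(P_n) = \{v_{i_1}, \ldots, v_{i_k}\}$ with $i_1 < \cdots < i_k$. Lemma~\ref{d} places the source $v_1$ (a $2$-source) and the sink $v_n$ (a $1$-sink) into $B$, so $i_1 = 1$ and $i_k = n$. A first easy observation is that $i_{j+1} \ge i_j + 2$ for every $j$, since otherwise the single arc $a_{i_j}$ would be a monochromatic path between two elements of $B$.

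The crux is upgrading this to $i_{j+1} = i_j + 2$. Fix $j$ and any vertex $v_l$ with $i_j < l < i_{j+1}$. Because $P_n$ is a directed path, every monochromatic color-$2$ path dominating $v_l$ originates at some $v_{i_{j'}} \in B$; if $j' < j$, this path would pass through $v_{i_j} \in B$ and produce a monochromatic color-$2$ path between two bikernel elements, contradicting independence. So $j' = j$, and the arcs $a_{i_j}, a_{i_j+1}, \ldots, a_{l-1}$ are all color $2$; a symmetric argument via absorbency forces $a_l, \ldots, a_{i_{j+1}-1}$ to all be color $1$. If $i_{j+1} - i_j \ge 3$, taking $l = i_j+1$ and $l = i_j+2$ yields contradictory colors for $a_{i_j+1}$, so $i_{j+1} = i_j + 2$. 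This immediately forces $n$ to be odd and $B = \{v_1, v_3, \ldots, v_n\}$, and applying the two color constraints at $l = i_j + 1$ pins down $c_{i_j} = 2$ and $c_{i_j+1} = 1$, so the colors alternate (ruling out induced monochromatic paths on three or more vertices) and $c_{n-1} = 1$. The main obstacle is this crux step: it is where the directedness of $P_n$ has to be combined with independence to tightly restrict the endpoints of every dominating and absorbing monochromatic path.
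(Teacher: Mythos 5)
Your proof is correct, but your necessity argument takes a genuinely different route from the paper's. For sufficiency you do essentially what the paper does (pin down the alternating coloring forced by the hypotheses and verify that the odd-indexed vertices form a bikernel; the paper merely asserts this last step, you verify it). For necessity, however, the paper argues locally: each forbidden configuration (two consecutive arcs of the same color, a color-2 arc into the sink, or even $n$ combined with alternation) produces two \emph{adjacent} critical vertices, which by Lemma~\ref{d} must both lie in the bikernel, contradicting independence. You instead argue globally: using Lemma~\ref{d} only to place the source and the sink in $B$, you trace the dominating color-2 path and the absorbing color-1 path of each vertex strictly between consecutive bikernel elements, use the fact that in a directed path any such monochromatic path would have to pass through an intermediate bikernel vertex (violating independence) to force these paths to start and end at the nearest bikernel elements, and conclude that consecutive elements of $B$ are exactly distance two apart with the colors $2,1$ in between. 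Your route is slightly longer but buys more: it shows the bikernel is unique (it must be exactly the odd-indexed vertices) and reconstructs the entire coloring, whereas the paper's critical-vertex argument is shorter and reuses the same template it applies to cycles and products elsewhere in Section~\ref{Sec1}. Both arguments are sound; no gaps found in yours (the only unaddressed point, shared with the paper, is the degenerate case $n=1$, where the conditions and the conclusion hold vacuously).
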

\begin{proof}
    Assume first that $P_n$ has a bikernel $B$. If $P_n$ contains a monochromatic path of length greater than or equal to two, say  $(x,y,z)$, with color 1. Then $y$ and $z$ are 2-sources, thus, by Lemma \ref{d} $y,z\in B$. Similarly, if $(x,y,z)$ is a path of color 2, then $x$ and $y$ are 1-sinks, thus, by Lemma \ref{d} $x,y\in B$. Therefore, $P_n$ contains at least two adjacent critical vertices, which contradicts that $B$ is an independent set.  Let $v,z\in V(P_{n})$ such that $\delta^+(z)=0$ and $(v,z)\in A(P_{n})$. If $c(v,z)=2,$ then both $v$ and $z$ are 1-sinks, thus, by Lemma \ref{d} $v,z\in B,$ which is again a contradiction.  Let $x,y\in V(P_{n})$ such that $\delta^-(x)=0$ and $(x,y)\in A(P_{n})$, suppose that $n$ is even, since $c(v,z)=1$ and $P_n$ does not contain a monochromatic path of length greater than or equal to two we have that $c(x,y)=1.$ This means that both $x$ and $y$ are 2-sources. Thus, by Lemma \ref{d} $x,y\in B,$ which is a contradiction. Hence, $n$ is odd.

    Now, let $P_{2k+1}$ be a bicolored path holding the hypothesis, with  $V(P_{2k+1})=\{x_1, \dots, x_{2k+1}\}$ and 
    $A(P_{2k+1})=\{(x_i, x_{i-1})\}_{i=2}^{2k+1}$. 
    Then  $B=\{x_1,x_3, \dots, x_{2k+1}\}$ is a bikernel for $P_n.$
\end{proof}
\begin{lema}\label{ciclo}
Let $D$ be a directed cycle whose edges are bicolored. Then $D$ has a unique bikernel if and only if $D$ does not contain monochromatic paths of length two or longer.
\end{lema}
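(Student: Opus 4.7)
The plan is to mirror the strategy of Proposition \ref{traybi}, adapting it to the fact that every vertex of a cycle has both an in-arc and an out-arc, and to address uniqueness in addition to existence. For the forward direction I argue by contrapositive: if $D$ contains a monochromatic subpath $(x,y,z)$ of color $1$, then $y$ and $z$ each have their unique in-arc colored $1$, so $\delta_2^-(y)=\delta_2^-(z)=0$ and both are 2-sources. By Lemma \ref{d} every bikernel must contain them, but $(y,z)$ is then a color-$1$ arc between two vertices of the bikernel, contradicting independence. The color-$2$ case is symmetric, with $x$ and $y$ becoming 1-sinks via their unique out-arcs. Hence no bikernel can exist, which in particular rules out a unique one.

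For the backward direction, the hypothesis forces consecutive arcs of $D$ to have different colors, so the colors strictly alternate around the cycle and $|V(D)|=2k$ for some $k\geq 1$. Label the vertices $x_1,x_2,\dots,x_{2k}$ following the cycle orientation, assuming without loss of generality that $(x_{2i-1},x_{2i})$ has color $1$ and $(x_{2i},x_{2i+1})$ has color $2$ (indices mod $2k$). Then each $x_{2i}$ is simultaneously a 1-sink and a 2-source, while each $x_{2i-1}$ is neither. Set $B=\{x_2,x_4,\dots,x_{2k}\}$. Independence by monochromatic paths is immediate: elements of $B$ are pairwise non-adjacent in $D$, and any walk of length at least two in $D$ uses arcs of both colors by alternation. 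Absorbency with color $1$ holds via the arc $(x_{2i-1},x_{2i})$ for each $x_{2i-1}\notin B$, and dominance with color $2$ holds via the arc $(x_{2i-2},x_{2i-1})$; hence $B$ is a bikernel.

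For uniqueness, Lemma \ref{d} forces every bikernel to contain all critical vertices, so every bikernel contains $B$. If a bikernel $B'\supsetneq B$ existed, it would contain some $x_{2i-1}$ together with $x_{2i}$, and the monochromatic arc $(x_{2i-1},x_{2i})$ would violate independence, so $B$ is the only bikernel. I expect the most delicate step to be the independence verification, since one must carefully rule out monochromatic paths of length $\geq 2$ between elements of $B$ that wind around the cycle; this is exactly where the strict alternation of colors plays its role, guaranteeing that any walk of length at least two contains arcs of both colors.
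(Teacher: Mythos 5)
Your proposal is correct and follows essentially the same route as the paper: use Lemma \ref{d} to force the two interior/critical vertices of any monochromatic subpath of length two into the bikernel (contradicting independence), and conversely use the forced alternation of colors to exhibit the set of critical vertices $\{x_2,x_4,\dots,x_{2k}\}$ as the bikernel, with uniqueness following because any bikernel must contain all critical vertices and cannot properly extend this maximal independent set. Your write-up just makes explicit a few details the paper leaves implicit (evenness of the cycle length and the verification of absorbency, dominance, and maximality).
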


\begin{proof}
    Assume that $D$ has a bikernel $B$ and that $D$ contains two consecutive arcs $(x,y), (y,z)$ having color two. Then $x$ and $y$ must both belong to $B$ since they are both 1-sinks, but this is impossible since they are adjacent. Analogously if $(x,y)$ and $(y,z)$ have color one, then $y$ and $z$ must belong to $B$ since they are both 2-sources, which gives a contradiction.
    Let $V(D)=\{x_1, \dots, x_{2n}\}$ and assume that, for $1\leq i\leq n,$ the arcs $(x_{2i-1}, x_{2i})$ have color one and the remaining arcs have color two. It is clear that $x_{2i}$ is critical. Then $P=\{x_2, x_4, \dots, x_{2n} \} $ is a bikernel for $D$. By Lemma \ref{d} this kernel is unique since any bikernel $B$ must contain the set $P$ and $P$ is a maximal independent set.
\end{proof}

\begin{obser} \label{clasesdecolor}
Let $D$ be a $(2,2)$ regular digraph such that every vertex has an inward and outward arc of each color. Then the following holds:
\begin{enumerate}
    \item The subgraph generated by $A_i(D)$ is the union of disjoint cycles for $ i=1,2.$
    \item If $D$ has a bikernel $B,$ then $B$ must contain a vertex belonging to each of the above mentioned cycles.  
\end{enumerate}
\end{obser}
By Remark \ref{clasesdecolor}, we know that each color class must be a disjoint union of cycles. However, it is possible for these two unions to be of a different number of cycles. The following theorem states that in order to have a bikernel, both color classes must consist of the same amount of disjoint cycles.
\begin{teo}
Let $D$ be a $(2,2)$ regular digraph such that every vertex has an inward and outward arc of each color. If $D$ has a bikernel then $A_i(D)$ is a disjoint union of $k$ cycles for $ i=1,2.$
\end{teo}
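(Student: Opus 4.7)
The plan is to extract a common cardinality $|B|$ for a bikernel $B$ in two different ways: once from the color-$1$ cycle decomposition of $A_1(D)$, and once from the color-$2$ cycle decomposition of $A_2(D)$. Since both counts must agree, the two decompositions are forced to use the same number of cycles.

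First I would invoke Remark~\ref{clasesdecolor}(1) to write $A_i(D)=C^{(i)}_1\sqcup\cdots\sqcup C^{(i)}_{k_i}$ as vertex-disjoint directed cycles, for $i=1,2$. The $(2,2)$-regularity hypothesis together with the assumption that each vertex has an inward and outward arc of each color gives $\delta_i^+(v)=\delta_i^-(v)=1$ for every vertex $v$ and every color $i$, so each vertex of $D$ lies on exactly one cycle of each color. Consequently $\{V(C^{(i)}_1),\dots,V(C^{(i)}_{k_i})\}$ is a partition of $V(D)$ for both $i=1,2$.

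Now let $B$ be a bikernel of $D$. By Remark~\ref{clasesdecolor}(2), $B$ meets every cycle $C^{(i)}_j$ in at least one vertex. The key step is to show it meets each one in \emph{exactly} one vertex: if $u\neq v$ were two elements of $B$ lying on the same cycle $C^{(i)}_j$, then traversing $C^{(i)}_j$ from $u$ up to $v$ (or from $v$ to $u$) along its monochromatic color-$i$ arcs produces a monochromatic $uv$-path or $vu$-path, contradicting the independence of $B$ by monochromatic paths. Combining the two facts yields $|B|=k_1$ from the color-$1$ partition and $|B|=k_2$ from the color-$2$ partition, whence $k_1=k_2$, which is the required conclusion.

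I do not foresee a serious obstacle. The argument is really a double count stemming from Remark~\ref{clasesdecolor}; the only small check is that each vertex sits on exactly one color-$i$ cycle, which follows immediately from the $(2,2)$-regularity and the hypothesis on in/out arcs of each color. No tools beyond Remark~\ref{clasesdecolor} and Definition~\ref{bikernel} are needed.
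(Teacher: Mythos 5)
Your proposal is correct and is essentially the paper's argument: both rest on the cycles of each color partitioning $V(D)$, on Remark~\ref{clasesdecolor} forcing $B$ to meet every monochromatic cycle, and on independence by monochromatic paths forbidding two bikernel vertices on one such cycle. The paper phrases this as a pigeonhole contradiction assuming $k_1<k_2$, while you run the same counts directly to get $|B|=k_1=k_2$; the difference is only presentational.
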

\begin{proof}
     Suppose by contradiction that $A_i(D)$ is a disjoint union of $k_i$ cycles for $i=1,2$ with $k_1\leq k_2.$ We may assume without loss of generality that $ k_1<k_2.$ Since $B$ contains one vertex of each of the $k_2$ disjoint cycles of color two, there must exist two vertices which belong to the same monochromatic cycle of color one which is a contradiction.
\end{proof}

%\begin{prop}
%Let $P_{n}$ be a directed path of odd length. Then there exists a unique coloration $c$ of  $P_n$ such that $(P_{n},c)$ has a bikernel.
%\end{prop}
%\begin{proof}
%Let  $P_{n}=(x_{0},x_{1},\ldots,x_{n})$ of odd length. We will construct a coloration of $P_{n}$ in order to obtain a  bikernel $B$ of it. 
%Observe that, $\delta(x_{i})=1,2$ for $x_{i}\in V(P_{n})$  we have that  $P_{n}$ does not contain monochromatic subdigraphs of length three or longer. Since $\delta^{-}(x_{0})=0=\delta^{+}(x_{n})$, $x_{0}$ and $x_{n}$ should belong to $B$, moreover by the properties of $B$, $c(x_{0},x_{1})=2$  and by the first observation $c(x_{1},x_{2})=1$ and so on. Therefore, since $P_{n}$ is of odd length we obtain that  $c(x_{n-1},x_{n})=1$.  Thus for $0\leq i\leq \frac{n}{2}$ the set $B=\{x_{2i}\}$ is a bikernel of $P_{n}.$  It is not hard to verify that the coloring described above is unique.
%\end{proof}

Recall the definition of the cartesian product of two digraphs
\begin{defi}
    Let $D_1$ and $D_2$ be two digraphs. We define the cartesian product of this two as the digraph $D_1\square D_2$  given by
    \begin{align*}
        V(D_1\square D_2)=&V(D_1)\times V(D_2)\\
        A(D_1\square D_2)=&\{((u,v_1),(u,v_2)): u\in V(D_1),(v_1,v_2)\in A(D_2)\}\\ 
        &\cup \{((u_1,v),(u_2,v)): v\in V(D_2),(u_1,u_2)\in A(D_1)\}
    \end{align*}
To simplify the notation we write the arcs in $D_1\square D_2$ as $(u_1v,u_2v)$ and $(uv_1,uv_2)$.
\end{defi}
If $D_1$ and $D_2$ are bicolored digraphs then we can induce a 2-coloring
where we color arcs of the form $(u_1v,u_2v)$ with the color of $(u_1,u_2)$ in $D_1$ and arcs of the form $(uv_1,uv_2)$ with the color of $(v_1,v_2)$ in $D_2$.
\begin{teo}\label{prodge}
    Let $G$ and $H$ be two colored digraphs which both have a bikernel. Then the digraph $G\square H$ has a bikernel.
\end{teo}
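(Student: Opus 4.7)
The plan is to take the direct product $B := B_G \times B_H$ as the candidate bikernel of $G \square H$, where $B_G$ is a bikernel of $G$ and $B_H$ is a bikernel of $H$. The key observation driving every step is that, by the definition of the Cartesian product, each arc of $G \square H$ is either a \emph{horizontal} arc coming from $G$ (lying in a fiber $G \times \{v\}$) or a \emph{vertical} arc coming from $H$ (lying in a fiber $\{u\} \times H$), and its color in $G \square H$ is the color it had in the corresponding factor. Consequently, the first-coordinate projection of any monochromatic path of color $c$ in $G \square H$ is a walk in $G$ all of whose traversed arcs have color $c$, and analogously for the second coordinate projection into $H$. Since a monochromatic walk between two distinct endpoints contains a monochromatic path of the same color between those endpoints, projection turns monochromatic paths in $G\square H$ into monochromatic paths (of the same color) in $G$ or $H$ whenever the endpoints differ in the relevant coordinate.

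For \emph{independence by monochromatic paths}, take two distinct vertices $(u_1,v_1), (u_2,v_2) \in B$ and suppose there were a monochromatic $(u_1,v_1)(u_2,v_2)$-path of some color $c$. Since the vertices are distinct, either $u_1\neq u_2$ or $v_1\neq v_2$. In the first case, projecting onto $G$ yields a monochromatic walk of color $c$ from $u_1$ to $u_2$ in $G$, hence a monochromatic path of color $c$ between two distinct elements of $B_G$, contradicting independence of $B_G$. The second case gives the analogous contradiction for $B_H$.

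For \emph{absorbence with color $1$} and \emph{dominance with color $2$}, let $(u,v)\notin B$, so $u\notin B_G$ or $v\notin B_H$. If $u\notin B_G$, pick a $uB_G$-monochromatic path of color $1$ in $G$ and lift it horizontally inside the fiber $G\times\{v\}$ to a color-$1$ monochromatic path from $(u,v)$ to some $(u^\ast,v)$ with $u^\ast\in B_G$; if $v\notin B_H$, then append a color-$1$ monochromatic path from $(u^\ast,v)$ to $(u^\ast,v^\ast)$ with $v^\ast\in B_H$, lifted vertically using absorbence of $B_H$. Concatenating produces a color-$1$ monochromatic path from $(u,v)$ into $B$. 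The dominance with color $2$ condition is proved symmetrically by lifting a color-$2$ monochromatic path from $B_G$ to $u$ in $G$ (or from $B_H$ to $v$ in $H$) and possibly concatenating, reaching $(u,v)$ from a vertex of $B$.

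The only subtle point, and the one worth flagging as the main obstacle, is the projection argument used in the independence step: one must ensure that a monochromatic walk obtained by projection genuinely witnesses a monochromatic path between the two distinct factor endpoints. This rests on the elementary fact that any walk in a digraph contains a path with the same endpoints, together with the fact that the multiset of arc-colors can only shrink under this reduction, so monochromaticity is preserved. Once this is acknowledged, the remaining verifications reduce to straightforward lifting and concatenation of paths inside single fibers, which behave exactly like copies of $G$ or $H$ with their original $2$-coloring.
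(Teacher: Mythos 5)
Your proposal is correct and follows essentially the same route as the paper: take the product $B_G\times B_H$ as the candidate bikernel, obtain independence by projecting a monochromatic path to the factor in which the endpoints differ, and obtain absorbence with color~$1$ and dominance with color~$2$ by lifting paths from the factors into fibers and concatenating. Your explicit walk-to-path reduction for the projection step is a welcome detail that the paper leaves implicit, but it is not a different argument.
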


\begin{proof}
    Let $K_G$ and $K_H$ denote the bikernels of $G$ and $H,$ respectively. Let $K=\{(v,w): v\in K_G, w\in K_H)\}$. We claim that $K$ is a bikernel for $G\square H.$

    Notice that a monochromatic path from $(x,y)$ to $(z,w)$ induces a monochromatic path from $x$ to $z$ if $x\neq z$ or a monochromatic path from $y$ to $w$ if $y\neq w$. Since both $K_G$ and $K_H$ are independent  by monochromatic paths,  $K$ is an independent by monochromatic paths set. Let $(x,y)\in V(G\square H) \setminus K$. There exists a monochromatic $xg$-path of color one for some $g
    \in K_G$ and a monochromatic $yh$-path of color one for some $ h\in K_H.$ This means that there exists a monochromatic path of color one from $(x,y)$ to $(x,h)$ and from $(x,h)$ to $(g,h)$ (we also have a monochromatic path of color one from $(x,y)$ to $(g,y)$ and from $(g,y)$ to $(g,h)$). Analogously there exist a monochromatic $px$-path and $qy$-path of color two for some $p\in K_G$ and $q \in K_H$ which means that there exists a monochromatic $(p,q)(x,y)$-path of color two in $G\square H.$
\end{proof}

In general, the Cartesian product of bicolored digraphs with a bikernel does have a bikernel. For example, the product of a monochromatic cycle of color one with a monochromatic cycle of color two has a bikernel (see Figure \ref{c4}). However, in Theorem \ref{prodtray} we proved that, for bicolored directed paths, the converse of Theorem \ref{prodge}  holds.

\begin{figure}[h!]
    \centering
    \includegraphics[scale=.35]{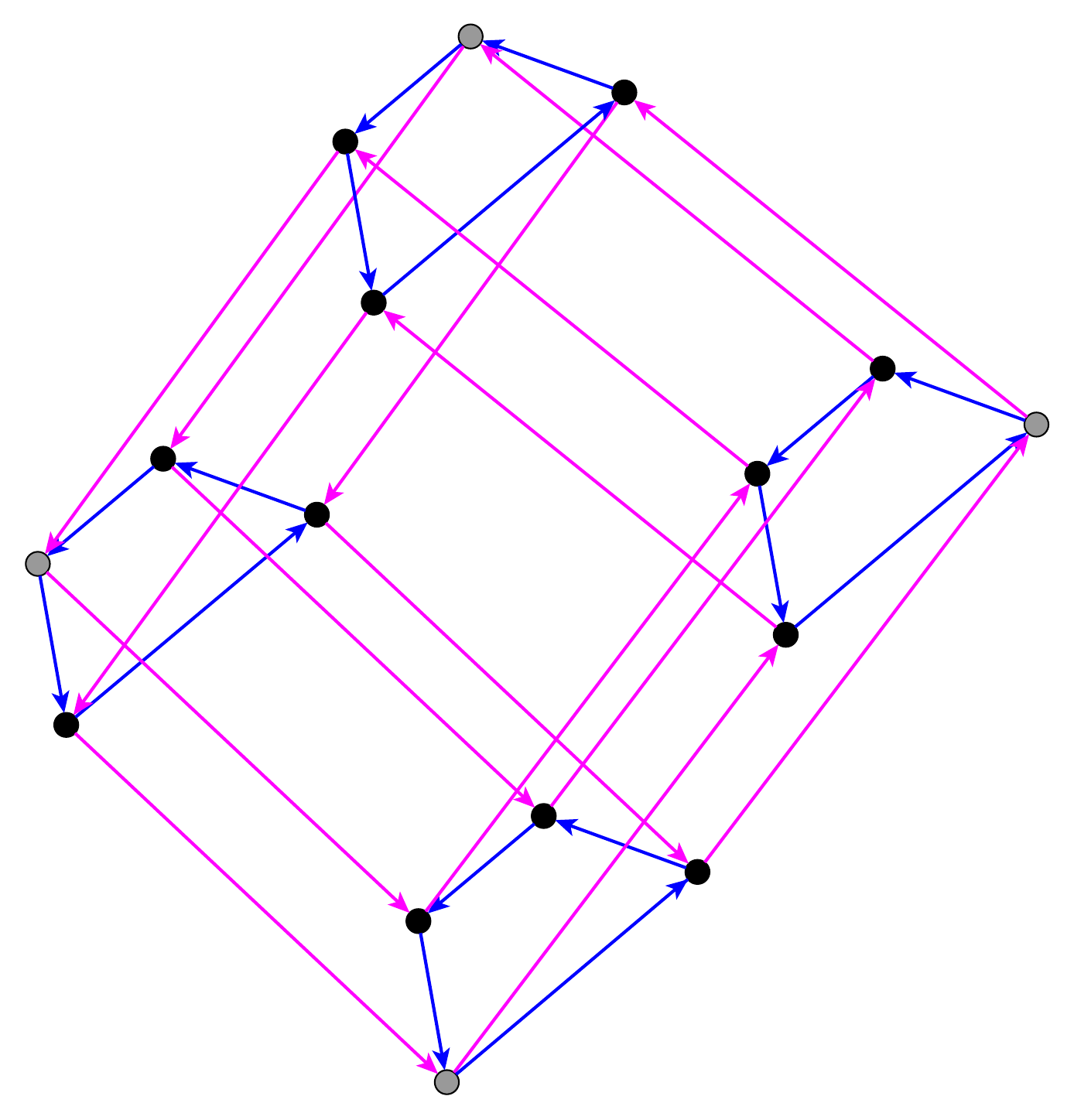}
    \caption{The graph $C_4\square C_4$ where the gray vertices form a bikernel.}
    \label{c4}
\end{figure}

\begin{teo}\label{prodtray}
    Let $G=\gamma_1 \square \dots \square \gamma_n$  where each $\gamma_i$ is a path for every $1\leq i \leq n. $ Then $G$ has a bikernel if and only if every $\gamma_i$ has a bikernel for $ 1 \leq i \leq n.$
\end{teo}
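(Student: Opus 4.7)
The forward direction ($\Leftarrow$) is straightforward: induct on $n$ and iteratively apply Theorem \ref{prodge} to the associative factorization $G = (\gamma_1 \square \cdots \square \gamma_{n-1}) \square \gamma_n$, so that having bikernels in each $\gamma_i$ propagates to a bikernel of $G$.

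The heart of the theorem is the reverse direction, which I will prove by contraposition: if some $\gamma_i$ admits no bikernel, then neither does $G$. Assume without loss of generality that $\gamma_1$ fails to have a bikernel. Inspecting the proof of Proposition \ref{traybi} shows that every failure mode of $\gamma_1$ produces a pair of adjacent critical vertices of a specific type in $\gamma_1$: either two $2$-sources joined by a color-$1$ arc (arising either from a monochromatic color-$1$ subpath of length two, or, when $n_1$ is even and there is no such subpath, from the source's out-arc being forced to color $1$ by alternation), or two $1$-sinks joined by a color-$2$ arc (arising either from a monochromatic color-$2$ subpath, or from the arc into the sink having color $2$).

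To transport these adjacent critical vertices into $G$, write $s_j, t_j$ for the source and sink of $\gamma_j$ and introduce the two distinguished fibers
\[ F^{\mathrm{abs}} = \{(x, t_2, \dots, t_n) : x \in V(\gamma_1)\}, \qquad F^{\mathrm{dom}} = \{(x, s_2, \dots, s_n) : x \in V(\gamma_1)\}, \]
each of which induces a subdigraph of $G$ isomorphic as a bicolored digraph to $\gamma_1$. Since $t_j$ is a sink of $\gamma_j$ for every $j \neq 1$, any vertex of $F^{\mathrm{abs}}$ has all of its out-arcs in coordinate $1$, so a vertex $x$ of $\gamma_1$ is a $1$-sink if and only if its lift $\bar{x}$ is a $1$-sink of $G$. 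Dually, since $s_j$ has no predecessors in $\gamma_j$ for $j \neq 1$, vertices of $F^{\mathrm{dom}}$ have all of their in-arcs in coordinate $1$, so $2$-sources of $\gamma_1$ correspond bijectively to $2$-sources of $G$ sitting inside $F^{\mathrm{dom}}$.

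Given any hypothetical bikernel $B$ of $G$, a pair of adjacent $2$-sources $b, c$ of $\gamma_1$ joined by a color-$1$ arc lifts to $\tilde{b}, \tilde{c} \in F^{\mathrm{dom}}$ which are $2$-sources of $G$; both lie in $B$ by Lemma \ref{d}, yet the color-$1$ arc $\tilde{b} \to \tilde{c}$ witnesses a monochromatic path between them, violating independence. The $1$-sink case is handled symmetrically inside $F^{\mathrm{abs}}$. The chief obstacle I anticipate is the case-by-case bookkeeping tied to Proposition \ref{traybi}, namely verifying that each of its three failure modes genuinely supplies a pair of adjacent critical vertices of one of the two types above; once that combinatorial step is settled the fiber-lifting argument is routine.
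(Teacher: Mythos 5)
Your proposal is correct and follows essentially the same route as the paper: the forward direction via (iterated) application of Theorem \ref{prodge}, and the converse by extracting from the failure of Proposition \ref{traybi} a pair of adjacent 1-sinks or adjacent 2-sources in the failing factor and lifting them into the product by padding the remaining coordinates with sinks (resp.\ sources), so that Lemma \ref{d} forces both lifts into any bikernel and contradicts independence. The only cosmetic difference is that you place the failing factor in the first coordinate and spell out the case analysis behind the adjacent critical pair, which the paper simply asserts.
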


\begin{proof}
Assume that $\gamma_j$ does not have a bikernel for some $j \in \{1, \dots, n\}$ and assume without loss of generality that $j=n.$   Then, $\gamma_n$ contains two adjacent critical vertices $x$ and $y$ such that $\delta^+_1(x)=\delta^+_1(y)=0$ or $\delta^-_2(x)=\delta^-_2(y)=0$. \\
        Let $v_i$ and $w_i$ be the first and last vertex of $\gamma_i$, with $i=1,2,\dots,n-1$. If $\delta^+_1(x)=\delta^+_1(y)=0$ then $(w_1,w_2,\dots, w_{n-1},x)$ and $(w_1,w_2,\dots, w_{n-1},y)$ are two adjacent 1-sinks in $\gamma_1 \square \dots \square \gamma_n$.  Similarly, if $\delta^-_2(x)=\delta^-_2(y)=0$ then $(v_1,v_2,\dots, v_{n-1},x)$ and $(v_1,v_2,\dots, v_{n-1},y)$ are two adjacent 2-sources in $\gamma_1 \square \dots \square \gamma_n$. Hence, $\gamma_1 \square \dots \square \gamma_n$ does not have a bikernel.

        Assume now that every path $\gamma_i$ has a bikernel $B_i$ for $1\leq i \leq n.$ Then, following the proof of Theorem \ref{prodge}, we can see that the set $B=\{(x_1, \dots, x_n): x_i \in B_i\}$ is a bikernel for $G.$
    \end{proof}

\begin{coro}\label{monociclo}
The product of monochromatic cycles $G=C_1 \square C_2$ has a bikernel if and only if $C_1$ and $C_2$ are of the same length and have opposite colors.
\end{coro}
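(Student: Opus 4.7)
The plan is to handle the two directions independently: the ``if'' direction by exhibiting a concrete bikernel, namely the diagonal, and the ``only if'' direction by invoking the structural theorem just proved.

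For the backward direction, suppose $C_1=(u_0,u_1,\ldots,u_{n-1})$ is monochromatic of color $1$ and $C_2=(v_0,v_1,\ldots,v_{n-1})$ is monochromatic of color $2$, both of length $n$. I will propose $B=\{(u_i,v_i)\,:\,0\leq i\leq n-1\}$ as the bikernel. The key observation is that in $C_1\square C_2$, a color-$1$ monochromatic path only changes the first coordinate and a color-$2$ monochromatic path only changes the second; so from $(u_i,v_i)$ a monochromatic path of either color sweeps around a single row $\{(u_k,v_i)\}_k$ or a single column $\{(u_i,v_k)\}_k$, and because the cycle length is $n$ it returns to $(u_i,v_i)$ without ever hitting another diagonal vertex. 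This gives independence by monochromatic paths. For absorption, given $(u_i,v_j)\notin B$ (so $i\neq j$) the color-$1$ path $(u_i,v_j),(u_{i+1},v_j),\ldots,(u_j,v_j)$ ends in $B$, and for domination the color-$2$ path $(u_i,v_i),(u_i,v_{i+1}),\ldots,(u_i,v_j)$ starts in $B$.

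For the forward direction, suppose $C_1\square C_2$ has a bikernel. First rule out the ``same color'' case: if $C_1$ and $C_2$ both carry color $i$, then $A(G)$ has no arc of the other color $j$, so for any vertex outside the bikernel one of the color-$j$ conditions (absorption or domination) is automatically violated; hence $B=V(G)$ would be forced, but $V(G)$ contains adjacent vertices joined by a color-$i$ arc and therefore is not independent by monochromatic paths, a contradiction. So the colors are opposite. In that case $G$ is a $(2,2)$-regular digraph with exactly one incoming and one outgoing arc of each color at every vertex, precisely the setting of the theorem preceding the corollary. That theorem says $A_1(G)$ and $A_2(G)$ must decompose into the same number of disjoint monochromatic cycles; but $A_1(G)$ is a disjoint union of $|V(C_2)|$ copies of $C_1$ and $A_2(G)$ is a disjoint union of $|V(C_1)|$ copies of $C_2$, so $|V(C_1)|=|V(C_2)|$.

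The main obstacle I anticipate is the independence check on the diagonal, since one must rule out wrap-around monochromatic paths in a cycle of length $n$; the point worth stating explicitly is that a color-$1$ (respectively color-$2$) monochromatic walk in $G$ is forced to keep the second (respectively first) coordinate fixed, so there is simply no candidate monochromatic path between two distinct vertices $(u_i,v_i),(u_k,v_k)$ of the diagonal. Everything else reduces to bookkeeping or to citing the $(2,2)$-regular theorem.
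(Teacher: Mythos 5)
Your proposal is correct and follows exactly the route the paper intends (the corollary is stated there without proof): the diagonal $\{(u_i,v_i)\}$ serves as the bikernel for sufficiency, precisely the configuration illustrated in Figure \ref{c4}, while necessity combines the easy same-color exclusion with the preceding theorem on $(2,2)$-regular digraphs, counting $|V(C_2)|$ color-1 cycles against $|V(C_1)|$ color-2 cycles. Your explicit observation that a color-1 (resp.\ color-2) monochromatic path fixes the second (resp.\ first) coordinate is the right way to settle independence of the diagonal, so no gaps remain.
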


\begin{teo}
Let $C_i$ be a directed cycle for $i=1,2$. Then $G=C_1\square C_2$ has a bikernel if and only if one of the following holds:
\begin{itemize}
    \item Both $C_1$ and $C_2$ have a bikernel, or, 
    \item both $C_1$ and $C_2$ are monochromatic of the same length and have opposite colors.
\end{itemize}
\end{teo}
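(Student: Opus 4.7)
The plan is to dispatch sufficiency from earlier results and then do the real work in the necessity. For sufficiency: if both $C_1$ and $C_2$ have bikernels, Theorem \ref{prodge} supplies a bikernel of $G$; if both are monochromatic of the same length with opposite colors, Corollary \ref{monociclo} does the job. So only the forward implication remains.

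Assume $G = C_1 \square C_2$ has a bikernel $B$. The key observation I would record up front is the explicit characterization of critical vertices in a Cartesian product of two cycles: $(u,v)$ is a $2$-source in $G$ if and only if both the $C_1$-arc entering $u$ and the $C_2$-arc entering $v$ are colored $1$; dually, $(u,v)$ is a $1$-sink if and only if both the $C_1$-arc leaving $u$ and the $C_2$-arc leaving $v$ are colored $2$. Combined with Lemma \ref{d} and the independence of $B$ under monochromatic paths, this reduces most cases to producing two critical vertices of the same type that are joined by a monochromatic path.

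I then split on whether some $C_i$ is monochromatic. In Case 1, assume without loss of generality that $C_2$ is monochromatic of color $c$. If $c = 2$ and $C_1$ contains a color-$2$ arc $(u, u'')$, then for every $v \in V(C_2)$ the vertex $(u, v)$ is a $1$-sink, so $B$ would contain two distinct $1$-sinks of the form $(u, v), (u, v')$ joined by a color-$2$ path around $C_2$, contradicting independence; a symmetric argument with $2$-sources rules out a color-$1$ arc in $C_1$ when $c = 1$. Thus $C_1$ is monochromatic of the opposite color $3-c$, and an appeal to Corollary \ref{monociclo}, combined with the hypothesis that $B$ exists, forces $|V(C_1)| = |V(C_2)|$. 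In Case 2, neither $C_i$ is monochromatic, so each contains arcs of both colors. I would show each $C_i$ is free of monochromatic subpaths of length $\geq 2$, whence Lemma \ref{ciclo} provides bikernels for both. For the contradiction: if $(x,y,z)$ were a color-$1$ path in $C_1$, picking a color-$1$ arc $(w_1, w_2)$ in $C_2$ (available since $C_2$ is bicolored) makes $(y, w_2)$ and $(z, w_2)$ both $2$-sources joined by the color-$1$ arc $(y,w_2)\to(z,w_2)$; the color-$2$ case is symmetric via $1$-sinks and a color-$2$ arc in $C_2$, and swapping the roles of $C_1$ and $C_2$ handles monochromatic paths in $C_2$.

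The main obstacle is bookkeeping rather than mathematical depth: two colors, two factors, and the two species of critical vertex proliferate into parallel subcases that all follow the same template. Stating the characterization of $1$-sinks and $2$-sources in $G$ once at the outset is what keeps each subcase down to a one-liner, and the entire necessity argument reduces to locating, in each configuration, a pair of critical vertices of matching type sitting on a common monochromatic line.
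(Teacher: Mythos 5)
Your proposal is correct and follows essentially the same route as the paper: sufficiency via Theorem \ref{prodge} and Corollary \ref{monociclo}, and necessity by forcing two monochromatically connected critical vertices of the product into the bikernel via Lemma \ref{d}, then concluding with Lemma \ref{ciclo} and Corollary \ref{monociclo}. Your explicit characterization of $1$-sinks and $2$-sources in $C_1\square C_2$ is in fact slightly more careful than the paper's own wording of the same step (which writes $\delta_2^+$ where it means $2$-sources), but the underlying argument and case analysis are the same, only organized directly rather than by contraposition.
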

\begin{proof}
    Assume first that one of the two conditions hold. If both $C_i$ has a bikernel $B_i$ for $i=1,2$ then $B=\{(x,y): x\in B_1, y\in B_2\}$ is a bikernel for $G.$ If the second item holds, by Corollary \ref{monociclo}, $G$ has a bikernel.

    Now, assume that none of the above items hold. Then, either $C_1$ or $C_2$ does not have a bikernel. Assume without loss of generality that $C_1$ does not have a bikernel. Thus, $C_1$ contains a monochromatic path of length at least three $(x,y,z,\dots).$ We can also assume that this path has color one. If $C_2$ contains an arc $(u,v)$ of color one, then $xu$ and $yu$ are two adjacent critical vertices since $\delta_2^+(xu)=\delta_2^+(yu)=0$ thus $G$ does not have a bikernel. 
    This means that $C_2$ is monochromatic of color two. But, by a similar argument, $C_1$ must be monochromatic of color $1.$ Thus by hypothesis, $C_1$ and $C_2$ must have different lengths, hence by Corollary \ref{monociclo}, $G$ does not have a bikernel.
\end{proof}

\subsection{Characterization of digraphs with bikernel}\label{caracterizacion}

The definition of a critical vertex requires for the vertex $v$ to be a 1-sink or a 2-source, this is, to satisfy $\delta_1^+(v)=0$ or $\delta_2^-(v)=0$. Now, if a vertex satisfy both conditions, that is, to be both a 1-sink and a 2-source then we will say that $v$ is a supercritical vertex.

\begin{lema}\label{sinksource}
    Let $G$ be a connected bicolored digraph that does not contain monochromatic directed cycles. If $v$ is not a 1-sink, then there exists a monochromatic $vu$-path of color 1 with $u$ a 1-sink. Similarly, if $v$ is not a 2-source then there exists a monochromatic $uv$-path of color 2 with $u$ a 2-source.
\end{lema}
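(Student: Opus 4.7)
The plan is to construct the required path greedily and use the hypothesis that $G$ has no monochromatic directed cycles to guarantee that the construction terminates at a critical vertex. I will focus on the first statement; the second is symmetric.

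First I would set $v_0 := v$ and build recursively a sequence $v_0, v_1, v_2, \dots$ of vertices such that each $(v_i, v_{i+1})$ is an arc of color $1$, stopping as soon as $v_k$ becomes a $1$-sink. Since $v_0 = v$ is by hypothesis not a $1$-sink, we have $\delta_1^+(v_0) \geq 1$, so some successor $v_1$ via a color-$1$ arc exists. At each step, if $v_i$ is not a $1$-sink we can extend by choosing any color-$1$ out-neighbour $v_{i+1}$; if $v_i$ is a $1$-sink, we stop.

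The key step is to argue that this process must terminate in finitely many steps at some $1$-sink $u = v_k$. For this I would show that the vertices $v_0, v_1, \dots$ are pairwise distinct. Indeed, if there were indices $i < j$ with $v_i = v_j$, then $(v_i, v_{i+1}, \dots, v_j)$ would be a directed cycle all of whose arcs have color $1$, contradicting the hypothesis that $G$ contains no monochromatic directed cycle. Since $G$ is finite, the sequence cannot continue forever, so it must terminate. The only stopping condition in the construction is reaching a $1$-sink, which gives the required vertex $u$, and the sequence $(v_0, v_1, \dots, v_k)$ is a monochromatic $vu$-path of color $1$.

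For the second assertion, I would perform the dual construction: starting from $v$, at each step extend backwards along an incoming color-$2$ arc, using that the current vertex is not a $2$-source. The same argument — distinct vertices because a repetition would create a monochromatic color-$2$ cycle — guarantees termination at a $2$-source $u$, producing a monochromatic $uv$-path of color $2$. The main (and only) subtlety is the non-repetition argument, which is precisely where the hypothesis of no monochromatic directed cycles is used; everything else is bookkeeping on a finite digraph.
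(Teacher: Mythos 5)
Your proof is correct and follows essentially the same route as the paper: greedily extend along color-1 arcs, note that a repeated vertex would yield a monochromatic color-1 cycle, and conclude by finiteness that the process stops at a 1-sink (and dually for 2-sources). If anything, your explicit non-repetition argument is slightly more careful than the paper's, which only remarks that the new vertex differs from the previous ones.
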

%\ddnote{Puse este resultado en un lema porque se usa bastante}

\begin{proof}
    If $v$ is not a 1-sink then there exists a vertex $v_1$ such that $(v,v_1)$ and $c(v,v_1)=1,$ if $\delta_1^+(v_1)=0$ then we take $u=v_1$.  Otherwise, $\delta_1^+(v_1)>0$ and there exists a vertex $v_2$ such that $(v_1,v_2)$ and $c(v_1,v_2)=1.$ Notice that $v_2$ is different from $v$ and $v_1$ since $G$ does not have monochromatic directed cycles. Continuing this process, we arrive at a vertex $u$ such that $\delta_1^+(u)=0$ and we obtain a $vu$-path of color 1 where $u$ a 1-sink. Analogously, we can prove that, if $v$ is not a 2-source then there exists a $uv$-path of color 2 where $u$ a 2-source. 
\end{proof}

\begin{teo}\label{acyclic}
    Let $G$ be a connected bicolored digraph that does not contain monochromatic directed cycles. Then $G$ has a unique bikernel if and only if every critical vertex in $G$ is a supercritical vertex.
\end{teo}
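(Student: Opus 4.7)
The plan is to let $B := \{v \in V(G) : v \text{ is a critical vertex}\}$ and show, using only Lemmas \ref{d} and \ref{sinksource}, that under the supercritical hypothesis $B$ is the unique bikernel, while conversely the existence of any bikernel forces every critical vertex to be supercritical.

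For the necessity ($\Rightarrow$) I would argue the contrapositive. Suppose some critical $v$ is not supercritical; without loss of generality $v$ is a $1$-sink but not a $2$-source (the other case is symmetric). If $G$ had any bikernel, Lemma \ref{d} would force $v$ into it. Since $v$ is not a $2$-source, Lemma \ref{sinksource} yields a color-$2$ monochromatic path from some $2$-source $w$ to $v$; being a $2$-source, $w$ is itself critical and so Lemma \ref{d} places $w$ in the bikernel as well. This color-$2$ monochromatic $wv$-path between two bikernel vertices violates independence, so $G$ has no bikernel at all, contradicting the hypothesis that it has a unique one.

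For the sufficiency ($\Leftarrow$), nonemptiness of $B$ follows because the subdigraph induced by the color-$1$ arcs is a finite DAG (no monochromatic cycles) and hence contains a vertex of color-$1$-out-degree zero, i.e.\ a $1$-sink, which by hypothesis is supercritical and thus in $B$. Independence of $B$ is immediate: a color-$1$ (resp.\ color-$2$) monochromatic path between elements of $B$ would contradict $\delta_1^+=0$ at its tail (resp.\ $\delta_2^-=0$ at its head), both of which hold because every element of $B$ is supercritical. Absorbency with color $1$ and dominance with color $2$ follow from Lemma \ref{sinksource} applied to any $v\notin B$: such a $v$ is neither a $1$-sink nor a $2$-source, so one obtains a color-$1$ path from $v$ to some $1$-sink in $B$ and a color-$2$ path from some $2$-source in $B$ to $v$. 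Uniqueness is then automatic: by Lemma \ref{d} any bikernel $B'$ contains $B$, and a hypothetical $v \in B'\setminus B$ is not a $1$-sink, so Lemma \ref{sinksource} produces a color-$1$ path from $v$ to a $1$-sink in $B\subseteq B'$, again contradicting independence of $B'$.

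I do not anticipate any real obstacle; the one subtlety worth emphasizing is that ``unique bikernel'' in the hypothesis implicitly requires existence, so a critical-but-not-supercritical vertex must actually obstruct existence---which is exactly what the necessity argument above shows by turning a failure of the supercritical condition into a failure of independence of every candidate bikernel.
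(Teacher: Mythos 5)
Your proposal is correct and follows essentially the same route as the paper: both directions rely on Lemmas \ref{d} and \ref{sinksource}, take $B$ to be the set of critical (hence supercritical) vertices, and prove uniqueness by noting any bikernel must contain $B$ and cannot contain anything more. Your only additions are the explicit nonemptiness argument for $B$ and the contrapositive phrasing of necessity, which are harmless refinements of the same argument.
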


\begin{proof}
    Suppose $G$ has a bikernel $B\subseteq V(G)$. Let $v\in V(G)$ be a critical vertex, and assume that $v$ is a 1-sink, but $v$ is not a 2-source. By Lemma \ref{sinksource}, there exist $u\neq v$ and $uv$-path of color 2 with $u$ a 2-source.
    Then, by Lemma \ref{d} $u$ and $v$  must belong to $B$ but there is a monochromatic $uv$-path which gives a contradiction. Therefore, $v$ is supercritical. By a similar argument if $v$ is a 2-source, it must also be a 1-sink, then every critical vertex of $G$ is supercritical.
    \medskip\\
    Now suppose that every critical vertex of $G$ is supercritical and consider the set
    $$B=\{v\in V(G): v\text{ is critical}\},$$
    we are going to prove that $B$ is a bikernel of $G$.\\ Take a vertex $v\in V(G)\setminus B$, thus $\delta_1^+(v)>0$ and $ \delta_2^-(v)>0$. Then, there exist $u$ and $w$ with $u$ a 1-sink and $w$ a 2-source such that there exist a $vu$-path of color 1 and a $wv$-path of color 2. So $u,w\in B.$ %and we get monochromatic paths from $v$ to $u\in B$ and from $w\in B$ to $v$ of color 1 and 2. 
    It remains to prove that $B$ is independent by monochromatic paths. Let $v,v'\in B$. Since $v$ and $v'$ are supercriticals, we have that $\delta_1^+(v)=\delta_1^+(v')=0$ and $\delta_2^-(v)=\delta_2^-(v')=0.$ Thus, there is no monochromatic paths between $v$ and $v'.$  Therefore, $B$ is a bikernel for $G$. 
    
    To prove uniqueness, notice that any bikerkel $B'$ must contain the set of critical vertices $B.$ If $B'\neq B$, then $B'$ contains a non-critical vertex which is both absorbed and dominated by vertices in $B \subseteq B'$ thus $B'$ is not independent. Hence, the bikernel is unique.
    
    %Now to prove the uniqueness suppose that $B'\subseteq V(G)$ is another bikernel of $G$. We must have that $B\subseteq B'$, if $B\neq B'$ then there is a non-critical vertex $v$ in $B'$, so $v$ isn't a 2-source and there must exist an $uv$-path of color 2 from a 2-source $u$ to $v$ which contradicts the independence by monochromatic paths of the bikernel. Then the bikernel $B$ is unique. 
\end{proof}
%\ddnote{modifique un poco esta solucion, y a la proposicion le agregue que el binucleo es unico en este caso}

\begin{coro}
    Let $T$ be a bicolored directed tree, then $T$ has a bikernel if and only if every critical vertex in $T$ is supercritical.
\end{coro}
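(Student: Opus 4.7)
The plan is to derive this corollary as a direct specialization of Theorem \ref{acyclic}. The only work required is to verify that a bicolored directed tree $T$ satisfies the hypotheses of that theorem, namely connectedness and the absence of monochromatic directed cycles.

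First I would note that, by definition, a tree is connected, so $T$ is connected. Next I would observe that any directed cycle in $T$ would, upon forgetting the orientation of its arcs, produce an undirected cycle in the underlying graph, contradicting the fact that the underlying graph of a tree contains no cycles. Hence $T$ contains no directed cycles whatsoever, and in particular no monochromatic directed cycles. Thus the hypotheses of Theorem \ref{acyclic} are met.

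Applying Theorem \ref{acyclic} directly then gives that $T$ has a (unique) bikernel if and only if every critical vertex of $T$ is supercritical, which is exactly the statement of the corollary (the word "unique" can simply be dropped since the corollary only claims existence). The main obstacle is essentially none: the argument is a one-line specialization, and the only subtlety is confirming that the notion of directed tree being used forbids directed cycles, which follows because any such cycle would yield a cycle in the underlying undirected tree.
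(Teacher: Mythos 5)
Your proposal is correct and is exactly the intended derivation: the paper states this corollary immediately after Theorem \ref{acyclic} with no separate proof, since a directed tree is connected and its underlying graph has no cycles, hence no (monochromatic) directed cycles, so the theorem applies verbatim. The only nuance worth noting is that the forward direction of Theorem \ref{acyclic} is stated for a \emph{unique} bikernel, but its proof only uses the existence of some bikernel to conclude that every critical vertex is supercritical, so dropping the word ``unique'' in the corollary is harmless, as you observe.
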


\begin{defi}
    We say that a digraph $D$ is $BK$-colorable if there exist a 2-coloring $c: A(D)\rightarrow \{1,2\}$ such that $(D,c)$ has a bikernel.
\end{defi}
%\thnote{que tal BK-coloreable en lugar de bikerneable?}

\begin{defi}
    We say that a directed tree $T$ is a star if there is some vertex $v\in V(T)$ such that $V(T)=\{v\}\cup N^+(v)\cup N^-(v)$ and $\lvert N^+(v)\rvert, \lvert N^-(v)\rvert>0.$ We name the vertex $v$ the center of the star. 
    Let $S_v$ denote a star centered at the vertex $v.$ A {\em star-decomposition} of a graph $G$ is a set of stars $\{S_{v_1},S_{v_2},\ldots,S_{v_k}\}$ contained in $G$ such that $\left(A(S_{v_1}),A(S_{v_2}),\ldots,A(S_{v_k})\right)$ is a partition of $A(G)$ and $d_G(v_i)=\vert A(S_{v_i}) \vert \geq 2,$ for $1 \leq i \leq k.$
    %A star decomposition of a digraph $D$, is a partition of $A(D)= A(S_1) \sqcup \dots \sqcup A(S_t)$ such that $V(D)=V(S_1) \cup \dots \cup V(S_1)$ and each subgraph $S_i$ is a star.
    
    %If $D$ is a digraph and $D=S_1\cup \dots\cup S_t$ with $S_i$ a star for every $1 \leq i \leq t$ such that every distinct pair $S_i,S_j$ of stars doesn't share arcs, then we say that $D=S_1\cup \dots\cup S_t$ is a decomposition of $D$ in stars.
\end{defi}
In Figure \ref{starry} we can see an example of a decomposition in stars. We colored the arcs so we can see the stars in it.
\begin{figure}[h!]
    \centering
   \includegraphics[scale=0.5]{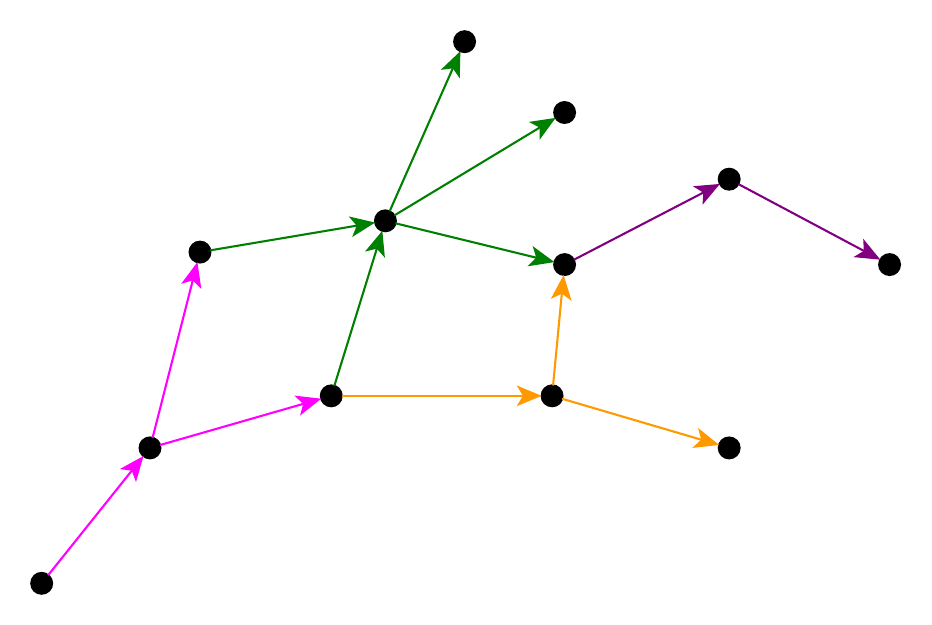}
    \caption{A decomposition in stars}
    \label{starry}
\end{figure}

\begin{teo}
    Let $D$ be a digraph without directed cycles, then $D$ has a $BK$-coloration if and only if $D$ has star-decomposition.
\end{teo}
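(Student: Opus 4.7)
The plan is to exhibit a natural correspondence between $BK$-colorings and star-decompositions of $D$ in which the star centers and the non-bikernel vertices play identical roles.

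For sufficiency $(\Leftarrow)$, given a star-decomposition $\{S_{v_1},\ldots,S_{v_k}\}$, I would color each arc of $S_{v_i}$ outgoing from the center $v_i$ with color $1$ and each arc incoming to $v_i$ with color $2$, and propose $B=V(D)\setminus\{v_1,\ldots,v_k\}$ as the bikernel. Because each center $v_i$ satisfies $|N^+(v_i)|,|N^-(v_i)|>0$, it carries both an outgoing color-$1$ arc and an incoming color-$2$ arc, so no center is critical; every non-center $u$ appears only as a leaf in its incident stars, and therefore has all incoming arcs of color $1$ and all outgoing arcs of color $2$, making $u$ supercritical. Monochromatic paths then have length at most one, so $B$ is independent; absorption by color $1$ and domination by color $2$ follow immediately from the fact that each center has a color-$1$ out-arc into $B$ and a color-$2$ in-arc from $B$. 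This direction is a direct verification.

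For necessity $(\Rightarrow)$, suppose $D$ has a $BK$-coloration with bikernel $B$. Since $D$ has no directed cycles, Theorem \ref{acyclic} applies and $B$ coincides with the set of supercritical vertices; in particular every $v\in B$ has all incoming arcs of color $1$ and all outgoing arcs of color $2$. The natural candidate is to take the star centers to be $V(D)\setminus B$, with each $S_u$ consisting of all arcs of $D$ incident to $u$. Non-criticality of $u\notin B$ gives $\delta_1^+(u),\delta_2^-(u)>0$, so $|N^+(u)|,|N^-(u)|>0$ and $S_u$ is a legitimate star of degree at least $2$.

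The hard part will be showing that the $S_u$ partition $A(D)$, i.e.\ that every arc has exactly one endpoint outside $B$. Half is easy: the tail of any color-$1$ arc is not a $1$-sink, hence not supercritical, hence not in $B$, and symmetrically the head of any color-$2$ arc is not in $B$. The delicate step is excluding arcs whose \emph{both} endpoints lie outside $B$. To rule out such an arc $(u,v)$, I would invoke Lemma \ref{sinksource} to trace, in the acyclic digraph, a maximal color-$2$ path backward from $u$ until it reaches a $2$-source (necessarily in $B$) and a maximal color-$1$ path forward from $v$ until it reaches a $1$-sink (necessarily in $B$), then try to concatenate these through $(u,v)$ to construct a monochromatic path joining two vertices of $B$ and contradict independence of $B$. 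Forcing the colors along the two extensions to match the color of $(u,v)$, so that the splice is genuinely monochromatic, is the combinatorial crux of the necessity direction.
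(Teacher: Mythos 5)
Your sufficiency direction is correct and is essentially the paper's argument: color the out-arcs of each center with color 1 and its in-arcs with color 2, observe that centers are non-critical while non-centers are supercritical, and conclude (either by your direct verification or by invoking Theorem \ref{acyclic}) that the non-centers form a bikernel.

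The necessity direction, however, contains a genuine gap: the step you identify as the crux --- that no arc of $D$ joins two vertices outside $B$ --- is not merely delicate, it is false, so no splicing of the paths from Lemma \ref{sinksource} can establish it. Take $V(D)=\{a,v,u,c,d\}$ with arcs $(a,v)$ of color 2, $(v,c)$ of color 1, $(v,u)$ of color 2 and $(u,d)$ of color 1. This digraph is acyclic and connected, its critical vertices $a,c,d$ are all supercritical, and $B=\{a,c,d\}$ is a bikernel: the only monochromatic paths leaving $B$ are $a\to v$ and $a\to v\to u$ (color 2), so $B$ is independent, while $v\to c$, $u\to d$ absorb with color 1 and $a\to v$, $a\to v\to u$ dominate with color 2. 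Yet the arc $(v,u)$ has both endpoints outside $B$; the color-2 path into $v$ ends at $a$ and the color-1 path out of $u$ ends at $d$, and these cannot be concatenated through the color-2 arc $(v,u)$ into a single monochromatic path, so the contradiction you hope for simply does not exist. Consequently your stars $S_v$ and $S_u$, defined as \emph{all} arcs incident to the center, would both contain $(v,u)$ and would not partition $A(D)$; in fact this example shows that no decomposition into stars each containing all arcs at its center exists here, so the stronger statement your construction aims at cannot be salvaged.

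The repair, which is what the paper does, is to put into the star of $v\notin B$ only part of its incident arcs: $A(S_v)=\{(v,x):x\in N_1^+(v)\}\cup\{(x,v):x\in N_2^-(v)\}$, the color-1 out-arcs and color-2 in-arcs of $v$. Your two ``easy half'' observations are then exactly what is needed: since $B$ is the set of supercritical vertices (Theorem \ref{acyclic}), the tail of every color-1 arc and the head of every color-2 arc lie outside $B$, so every arc belongs to precisely one such star (a color-1 arc to the star of its tail, a color-2 arc to the star of its head), and $\delta_1^+(v),\delta_2^->0$ for $v\notin B$ guarantees each star has both an out-arc and an in-arc. No claim about arcs between two non-bikernel vertices is needed; such an arc is simply assigned to the star of one of its endpoints according to its color.
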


\begin{proof}
Suppose $D$ has a star-decomposition, $S_{v_{1}},\dots,S_{v_{t}}$ where, for $ 1 \leq i \leq t,$  $v_i$ be the center of $S_i.$  Define $c:A(D)\rightarrow \{1,2\}$ as follows; we color the outward arcs of $v_i$ in $S_i$ with color 1 and the inward arcs of $v_i$ in $S_i$ with color 2. This 2-coloration is well defined because no pair of stars share arcs. Notice that $\delta_1^+(v_i),\delta_2^-(v_i)>0$ for every $ 1\leq i \leq t$. Also, for every $w \in V(D)\setminus\{v_1, \dots, v_t\}$ we have $\delta_1^+(w)=\delta_2^-(w)=0$.
So the condition of Theorem \ref{acyclic} is met and we conclude that $(D,c)$ has a bikernel.\\ 

Now suppose that $(D,c)$ has a bikernel $B\subseteq V(G).$ by Proposition \ref{acyclic} we know that the unique bikernel is $B=\{v\in V(D): v\text{ is critical}\}.$\\
Let $M=V(D)\setminus B$ and notice that $\delta_1^+(v), \delta_2^-(v)>0$, for all $v\in M$.
For each $v\in M$ define $S_v$ as the star such that $$V(S_v)=\{v\}\cup N_1^+(v)\cup N_2^-(v),$$ $$A(S_v)=\{(u,v):u\in N_2^-(v)\}\cup\{(v,u):u\in N_1^+(v)\}.$$  It is easy to see that $D=\bigcup_{v\in M}S_v$ with $S_v$ stars that does not share arcs by pairs.\\ Thus, $D$ has a star-decomposition.
\end{proof}
%\ddnote{Modifique esta solucion, favor de revisarla}

\begin{coro}
    Let $T$ be a directed tree, then $T$ has a bikernel if and only if $T$ has a decomposition in stars.
\end{coro}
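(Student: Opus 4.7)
The plan is to obtain this corollary as an immediate specialization of the preceding theorem, which gives the same characterization for any digraph without directed cycles. The first step is to observe that every directed tree $T$ is acyclic: its underlying undirected graph is a tree and hence contains no cycle, so $T$ admits no directed cycle either. Thus the hypothesis ``$D$ has no directed cycles'' of the theorem is automatic when $D = T$.

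Reading ``$T$ has a bikernel'' for an uncolored tree in the sense of $BK$-colorability (i.e., there exists a $2$-coloring of $A(T)$ such that the resulting bicolored digraph has a bikernel, matching the definition of $BK$-colorable just given), the corollary coincides verbatim with the preceding theorem applied to $T$. Hence both directions transfer at once. For the forward direction, a $BK$-coloration of $T$ yields a star-decomposition by the construction in the second half of the theorem's proof: each non-critical vertex $v$ becomes the center of a star whose arcs are the color-1 out-arcs and the color-2 in-arcs of $v$, and these stars partition $A(T)$. For the reverse direction, given a star-decomposition $\{S_{v_1},\dots,S_{v_t}\}$, one colors the outward arcs at each center $v_i$ with color $1$ and the inward arcs at $v_i$ with color $2$; then every $v_i$ is non-critical while every other vertex is supercritical, so by Theorem \ref{acyclic} the set of critical vertices forms a bikernel.

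There is essentially no obstacle; the only point requiring care is to make explicit that ``$T$ has a bikernel'' in the corollary must be interpreted as ``$T$ is $BK$-colorable'', since $T$ is presented without a prescribed coloring. Once this convention is fixed, the statement is a direct instance of the preceding theorem and no further combinatorial argument is needed.
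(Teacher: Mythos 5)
Your proposal is correct and matches the paper's intended argument exactly: the corollary is meant as an immediate specialization of the preceding theorem (with ``has a bikernel'' read as $BK$-colorable), since a directed tree contains no directed cycles. Your explicit remark about this interpretation and your sketch of both directions add nothing beyond the theorem's own proof, which is precisely how the paper treats it.
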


\begin{teo}
    Let $T$ be a tournament with $\lvert V(T)\rvert\geq4$. $T$ is BK-coloreable if and only if $T$ is strongly connected.
\end{teo}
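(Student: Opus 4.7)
For the ``only if'' direction, I begin with the observation that in a tournament every pair of distinct vertices is joined by an arc, which is already a monochromatic path of length one. Consequently, a subset of $V(T)$ independent by monochromatic paths has at most one element, so any bikernel of $T$ is a singleton $\{v\}$. The absorbent-with-color-$1$ and dominant-with-color-$2$ axioms of Definition~\ref{bikernel} then supply directed paths from every other vertex to $v$ and from $v$ to every other vertex, which is precisely the statement that $T$ is strongly connected.

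For the converse, my first step is to translate the problem into a structural statement on $T$. A $2$-coloring making $\{v\}$ a bikernel amounts to a partition $A(T)=A_1\cup A_2$ in which $A_1$ contains a spanning in-arborescence $B^-$ rooted at $v$ (so every vertex has a color-$1$ path to $v$) and $A_2$ contains a spanning out-arborescence $B^+$ rooted at $v$ (so $v$ reaches every vertex using color $2$). Conversely, any arc-disjoint pair $(B^-,B^+)$ of such arborescences in $T$ yields a $BK$-coloration after colouring $B^-$ with $1$, $B^+$ with $2$, and all other arcs arbitrarily. The theorem therefore reduces to exhibiting such a ``good pair'' of branchings at some common root in every strong tournament on at least four vertices.

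To build $(B^-,B^+)$, I plan to use a Hamiltonian cycle. By Camion's theorem $T$ has a Hamiltonian cycle $C\colon v_1\to v_2\to\cdots\to v_n\to v_1$. For each index $i$ let $P_i$ be the Hamiltonian path $v_{i+1}\to v_{i+2}\to\cdots\to v_i$ (indices mod $n$), obtained by removing the arc $v_i\to v_{i+1}$ from $C$; $P_i$ is automatically a spanning in-arborescence at $v_i$. The complement $T\setminus E(P_i)$ then contains the single remaining cycle arc $v_i\to v_{i+1}$ together with every chord of $C$, and since $n\ge 4$ there are at least $n(n-3)/2\ge 2$ chords. I will choose $i$ so that $v_i$ reaches every other vertex inside $T\setminus E(P_i)$; any out-arborescence of that subdigraph rooted at $v_i$ then pairs arc-disjointly with $B^-=P_i$, completing the construction.

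The hard part will be proving that some such index $i$ always exists. For $n=4$ I plan a direct case analysis over the four possible orientations of the two chords of $C$; in each case at least one of the four cyclic rotations of the root is valid (e.g.\ chords $v_1\to v_3,\,v_2\to v_4$ admit $i=1$; $v_3\to v_1,\,v_2\to v_4$ admit $i=2$; and the remaining two configurations admit $i=4$ and $i=3$ respectively). For $n\ge 5$ I expect a rotation argument to suffice: if some $v_j$ is unreachable from $v_i$ in $T\setminus E(P_i)$, strong connectivity of $T$ should force a chord whose orientation allows shifting the index so that the set of unreachable vertices strictly shrinks, and iteration terminates. Alternatively, one may invoke directly the known result that every strong tournament on at least four vertices possesses arc-disjoint in- and out-branchings rooted at a common vertex, which immediately supplies the pair $(B^-,B^+)$.
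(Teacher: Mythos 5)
Your ``only if'' direction is correct and, via the observation that in a tournament every arc is a monochromatic path so any bikernel is a singleton $\{v\}$, it is in fact cleaner than the paper's own argument. Your reduction of the ``if'' direction is also sound: a $BK$-coloration with bikernel $\{v\}$ is equivalent to the existence of arc-disjoint spanning in- and out-branchings rooted at $v$ (color the in-branching $1$, the out-branching $2$, the remaining arcs arbitrarily). The genuine gap is that you never establish that such a pair exists in every strong tournament with at least four vertices. The Hamiltonian-cycle construction is only a plan for $n\ge 5$: you write that you ``expect'' a rotation argument in which the set of vertices unreachable from $v_i$ in the chord digraph together with the single arc $(v_i,v_{i+1})$ ``should'' shrink after shifting the root, but shifting the root changes both the root and the available arc set, so no monovariant is exhibited, no termination is proved, and it is not even argued that a good index exists for the particular Hamiltonian cycle you fixed (the choice of cycle could a priori matter). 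The fallback of ``invoking the known result'' that every strong tournament on at least four vertices has arc-disjoint in- and out-branchings rooted at a common vertex is circular as written: that statement is precisely equivalent to the direction you are proving, and the classical results in this area (Bang-Jensen's characterization of tournaments with such branchings at a \emph{specified} root, and sufficiency of $2$-arc-strong connectivity for every root) do not immediately give the ``some root'' statement for merely strong tournaments; without a precise citation you are assuming the conclusion.

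For comparison and as a possible repair: the paper proceeds by induction on $n$, giving an explicit coloring of the unique strong tournament on $4$ vertices, and for $n>4$ choosing a vertex $v$ such that $T\setminus v$ is strong (such a vertex exists, e.g.\ by Moon's theorem), then coloring all arcs leaving $v$ with color $1$ and all arcs entering $v$ with color $2$ while keeping the inductive coloring on $T\setminus v$; the inductive bikernel vertex remains a bikernel. You could either adopt that induction directly, or run the same induction to prove your branching statement (which it implies), after which your reduction goes through.
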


\begin{proof}
   If $T$ is not strongly connected, then there exist two vertices $x, y\in V(T)$ such that $d+(x)=0=d^-(y)$ which means that both $x$ and $y$ are critical. Since, $T$ is a tournament, $T$ any bikernel of $T$ consists of one unique vertex thus cannot contain both $x$ and $y.$
   Now assume that $T$ is strongly connected. Proceed by induction over $n$.
   For $n=4$ we have the following coloring for the strongly connected tournament, where the black vertex is a bikernel.

   \begin{center}
   \begin{tikzpicture}[scale=2]
    \node[circle, draw, fill=black, scale=.5] (1) at (0,0){};
    \node[circle, draw, scale=.5] (2) at (1,0){};
    \node[circle, draw, scale=.5] (3) at (0,1){};
    \node[circle, draw, scale=.5] (4) at (1,1){};
    \draw[->, purple, thick] (1)--(2);
    \draw[->, purple, thick] (2)--(3);
    \draw[->, purple, thick] (3)--(4);
    \draw[->, blue, thick] (3)--(1);
    \draw[->, blue, thick] (4)--(1);
    \draw[->, blue, thick] (2)--(4);
   \end{tikzpicture}
\end{center}

   Now assume that $n>4$ and let $v$ be a vertex such that $T'=T\setminus v$ is strongly connected. Then by induction hypothesis, $T'$ has a coloring $c$ such that $(T',c)$ has a bikernel $k$ consisting of a single vertex. We color every arc $(x,v)$ with color 2 and every arc $(v,y)$ with color one for $x,y\in V(T')$ to obtain a coloring $c'$ of $T$ such that $k$ is a bikernel for $(T,c').$
\end{proof}

%\subsection{Cyclic classes}\label{Sclasesciclicas}
    We define the following relations in $V(G),$ for $i\in \{1,2\}$
    \begin{center}
        $u\sim_i v$  if and only if   $u=v$ or $u\neq v$ and there exist an $uv$-path and a $vu$-path of color $i$.\\
        %$u\sim_2 v$ if and only if $ u=v $ or $u\neq v$ and there exist an $uv$-path and a $vu$-path of color 2.\\
        $u\sim v$  if and only if   $u=v$ or $u\neq v$ and there exist an $uv$-path and a $vu$-path.
    \end{center}
    It is easy to see that these are equivalence relations. We consider the equivalence classes, which we call the $i$-cyclic classes and strongly connected components for $i\in \{1,2\}$
        $${C_i}=\{[c]_i:c\in V(G)\},\quad
        %{C_2}=\{[c]_2:c\in V(G)\},\quad 
        {C}=\{[c]:c\in V(G)\}.$$
    Define the following relations, for $i\in \{1,2\},$ in $C_i$ and $C.$
    \begin{center}
        $[u]_i\leq_1 [v]_i$  if and only if $ u=v $ or $u\neq v$ and there exists an $uv$-path of color $i$.\\
        %$[u]_2\leq_1 [v]_2$  if and only if $ u=v $ or $u\neq v$ and there exists an $uv$-path of color 2.\\
        $[u]\leq [v]$  if and only if $ u=v $ or $u\neq v$ and there exist an $uv$-path.
    \end{center} 
    Observe that, each relation defined above, is a partial order.

%\begin{coro}
 %   Let $G$ be a bicolored digraph with bikernel $B\subset V(G)$, and $v\in V(G)$ be an acyclic vertex. Then $$v\in B \text{ if and only if $v$ is of type I}.$$
%\end{coro}

%From this corollary we see that if we have a digraph with bikernel, then it's elements are the acyclic vertices of type I plus some cyclic vertices, so now we have to take a look at which cyclic vertices we can get, for that we need the next definitions

\begin{teo} %\thnote{se me hace mas facil llamarles clases terminales e iniciales en lugar de maximales y minimales pero tambien esta bien asi}
    Let $G$ be a bicolored digraph, $C_1$ and $C_2$ its 1-cyclic classes and 2-cyclic classes and $M_1$, $M_2$ the set of maximal classes in $C_1$ and minimal classes in $C_2$ respectively. Then $G$ has a bikernel if and only there exist a bijection $f:M_1\longrightarrow M_2$ such that $f(m_1)\cap m_1\neq\emptyset$ for all $m_1\in M_1$.
\end{teo}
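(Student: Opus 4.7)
The plan is to show that a bikernel $B$ must pick out exactly one vertex per maximal 1-cyclic class, that this same vertex lies in a (necessarily unique) minimal 2-cyclic class, and that the induced correspondence $m_1\mapsto [b_{m_1}]_2$ is the required bijection; conversely, that any such $f$ lets us construct a bikernel by selecting one vertex from each intersection $m_1\cap f(m_1)$.

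For the ``only if'' direction, suppose $B$ is a bikernel. First I would establish that $|B\cap m_1|=1$ for every $m_1\in M_1$: since $m_1$ is $\leq_1$-maximal, any color-1 path starting at $v\in m_1$ stays inside $m_1$, so the color-1 absorbing condition forces $B\cap m_1\neq\emptyset$, while any two distinct vertices of the same 1-cyclic class are joined by a color-1 path, forcing uniqueness by independence. Dually, $|B\cap m_2|=1$ for every $m_2\in M_2$. The crucial step is to show that every $b\in B$ lies in some $m_1\in M_1$ (and some $m_2\in M_2$): if $[b]_1$ were not maximal, I would ascend the finite poset $(C_1,\leq_1)$ from $[b]_1$ to a maximal class $m_1$, take the unique $b_{m_1}\in B\cap m_1$, and concatenate a color-1 path $b\to u\in m_1$ with a color-1 path from $u$ to $b_{m_1}$ inside $m_1$, obtaining a color-1 $b\to b_{m_1}$ path with $b\neq b_{m_1}$, contradicting independence. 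Once this is established, $f(m_1):=[b_{m_1}]_2$ is a well-defined map $M_1\to M_2$, bijective because $|M_1|=|B|=|M_2|$ and each $b\in B$ lies in a unique $1$- and $2$-cyclic class, and the required intersection condition is immediate from $b_{m_1}\in m_1\cap f(m_1)$.

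For the converse, given $f$, pick $b_{m_1}\in m_1\cap f(m_1)$ for each $m_1\in M_1$ and set $B=\{b_{m_1}:m_1\in M_1\}$; these vertices are distinct because distinct $m_1$'s are disjoint as equivalence classes. Independence follows from extremality: a color-1 path $b_{m_1}\to b_{m_1'}$ would give $[b_{m_1'}]_1\geq_1 m_1$, forcing $m_1'=m_1$ by $\leq_1$-maximality; a color-2 path $b_{m_1}\to b_{m_1'}$ would give $f(m_1')=[b_{m_1'}]_2\leq_2 f(m_1)$, forcing $f(m_1)=f(m_1')$ by $\leq_2$-minimality and hence $m_1=m_1'$ by injectivity of $f$. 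For color-1 absorption, given $v\notin B$ I would ascend $\leq_1$ from $[v]_1$ to some $m_1\in M_1$, producing a color-1 path from $v$ into $m_1$ and then (within $m_1$) to $b_{m_1}$; color-2 domination is symmetric using $f^{-1}$ and minimality in $\leq_2$.

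The main obstacle I anticipate is the chase showing that every $b\in B$ sits in a maximal 1-cyclic class (and dually in a minimal 2-cyclic class); this is where the interplay between independence, the absorption/domination conditions, and the partial-order structure is most delicate, and it is also the step that tacitly uses finiteness of $G$ to guarantee that $\leq_1$ has maximal elements above every class and $\leq_2$ has minimal elements below every class.
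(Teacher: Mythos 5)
Your argument is correct and takes essentially the same route as the paper's proof: both directions rest on showing that each maximal $1$-class and each minimal $2$-class meets the bikernel in exactly one vertex, that every bikernel vertex must itself lie in a maximal $1$-class and a minimal $2$-class (by independence plus finiteness of the posets $\leq_1,\leq_2$), and that, conversely, choosing one vertex from each intersection $m_1\cap f(m_1)$ produces a bikernel, which you verify in more detail than the paper does. One small slip: a color-$2$ path from $b_{m_1}$ to $b_{m_1'}$ yields $f(m_1)\leq_2 f(m_1')$ rather than the reverse inequality you wrote, but since both classes are $\leq_2$-minimal the equality $f(m_1)=f(m_1')$ follows either way, so the conclusion is unaffected.
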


\begin{proof}
    Suppose $G$ has a bikernel $B\subset V(G)$. Let $[v]_1\in M_1$ be a maximal element in $C_1$ and  $v'\in [v]_1$. Suppose that there is no element of $B$ in $[v]_1$. Thus, there exists $b\in B$ such that there is a $v'b$-path of color 1. Therefore, $[v]_1\leq_1 [b]_1,$ a contradiction. Hence, every maximal class $[v]_1\in M_1$ contains an element of $B$.      
    Furthermore, since $B$ is an independent by monochromatic paths, every maximal class must contain exactly one element of $B.$ Similarly, we can prove that every minimal class $[v]_2\in M_2$ contains exactly one element of $B$. 
    
    Now, let $B=\{v_1,v_2,\cdots,v_n\}$, we are going to show that every class $[v_i]_1$ is maximal and every class $[v_i]_2$ is minimal. Recall that $\leq_1$ and $\leq_2$ are finite partial orders, so for every class $[v_i]_1$ there must exist a maximal class $[v]_1$ such that $[v_i]_1\leq [v]_1$.
    
     Notice that $[v_i]_1\neq [v]_1$ contradicts the independence by monochromatic paths of $B$ since both $[v_i]_1$ and $[v]_1$ contain an element of $B$. Similarly, for every class $[v_i]_2$ there must exist a minimal class $[v]_2$ such that $[v_i]_2\geq [v]_2$, so $[v_i]_2=[v]_2$. Thus, we have
    $$M_1=\{[v_1]_1,[v_2]_1,\cdots,[v_n]_1\} \text{ and }M_2=\{[v_1]_2,[v_2]_2,\cdots,[v_n]_2\}$$
    and the bijection is clear.\medskip\\
    Now suppose we have a bijection $f:M_1\longrightarrow M_2$ such that  $f(m_1)\cap m_1\neq\emptyset$ for all $m_1\in M_1$. By choosing a vertex in each intersection, we obtain a bikernel for $G.$
    %Then we can choose a vertex in each intersection to get a bikernel.
\end{proof}

\subsection{Directed cycles with chords}\label{Scicloconcuerda}
Now we are going to see when a directed cycle with a chord has a bikernel. Let $D$ a directed cycle with chords, the base cycle $C$ is the digraph obtained by removing the chords from $D.$  To do this we will use the following lemmas.

\begin{lema}\label{1,2}
    Let $D$ be a bicolored directed cycle with a chord, $B\subset V(D)$ a bikernel of $D$ and $C=(v_1,v_2,\cdots,v_n)$ its the base cycle. If $(x,y,z)$ is a path of length three in $C$ such that $(x,y)$ has color 1 and $(y,z)$ has color 2, then $y\in B$.
\end{lema}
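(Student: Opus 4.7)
The strategy is to show that $y$ is a critical vertex in $D$, so Lemma~\ref{d} will immediately force it into the bikernel $B$. The relevant observation is that the base cycle $C$ contributes to $y$ exactly one incoming arc, namely $(x,y)$ of color~$1$, and exactly one outgoing arc, namely $(y,z)$ of color~$2$. Consequently, any color-$1$ outgoing arc or any color-$2$ incoming arc at $y$ in $D$ would have to be supplied by the chord.

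Since the chord is a single arc, it has a unique tail and a unique head, so it cannot simultaneously be an outgoing arc of $y$ and an incoming arc of $y$. I would therefore split the argument into two (possibly overlapping) exhaustive cases according to which of these incidences at $y$ fails. If the chord is not an outgoing arc of color~$1$ from $y$, then $\delta_1^+(y)=0$ in $D$, so $y$ is a $1$-sink. If instead the chord is not an incoming arc of color~$2$ to $y$, then $\delta_2^-(y)=0$ in $D$, so $y$ is a $2$-source. The point is that the chord cannot block both conditions simultaneously, because it is a single oriented arc; hence at least one of the two alternatives must occur.

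In either case $y$ is a critical vertex, and Lemma~\ref{d} yields $y\in B$, as required. I do not expect any serious obstacle here: the whole argument is a short case check whose correctness rests entirely on the rigidity of a directed cycle with a single chord, together with the fact that within the base cycle $y$ has a unique in-arc and a unique out-arc of the two prescribed colors.
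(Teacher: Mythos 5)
Your argument is correct and follows essentially the same route as the paper: the only in-arc of $y$ in the base cycle has color~1 and the only out-arc has color~2, so a case analysis on the single chord shows $y$ is a $1$-sink or a $2$-source, and Lemma~\ref{d} gives $y\in B$. Your two-case formulation (the chord cannot simultaneously be a color-$1$ out-arc and a color-$2$ in-arc of $y$) is just a slightly compressed version of the paper's three cases (chord missing $y$, chord outgoing from $y$, chord incoming to $y$).
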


\begin{proof}
    If $y$ is not contained in the chord then $y$ is critical since $y$ has no inward arcs of color 2. If the chord is an outward arc of $y$ then $y$ has no inward arcs of color 2, so $y$ is critical. If the chord is an inward arc of $y$ then $y$ has no outward arcs of color 1, so $y$ is critical. In any case $y$ is critical and by Lemma \ref{d}, $y\in B.$
\end{proof}

\begin{lema}\label{4}
    Let $D$ be a bicolored directed cycle with a chord which has a bikernel. Let $C$ the base cycle of $D.$ Then $C$ has no monochromatic paths of length 4.
\end{lema}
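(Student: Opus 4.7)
The plan is to argue by contradiction. Suppose $C$ contains a monochromatic path of length $4$, say $P=(x_1,x_2,x_3,x_4,x_5)$. By the symmetry between color $1$/$2$-sources and color $2$/$1$-sinks, I may assume all four arcs of $P$ have color $1$; the color $2$ case then follows by dualizing (replacing ``$2$-source'' by ``$1$-sink'' throughout).

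The key observation concerns the three internal vertices $x_2,x_3,x_4$. In the base cycle $C$ every vertex has exactly one inward arc and one outward arc, and along $P$ both of these have color $1$ at each $x_i$ ($i=2,3,4$). Hence, viewed inside $C$, each internal vertex of $P$ is a $2$-source. Passing from $C$ to $D$ amounts to adding a single chord, which has exactly one head. Therefore the chord can add an inward arc of color $2$ to at most one vertex of $V(D)$, so at most one vertex of $\{x_2,x_3,x_4\}$ can fail to be a $2$-source in $D$.

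Consequently at least two vertices of $\{x_2,x_3,x_4\}$ remain $2$-sources in $D$. By Lemma \ref{d} these two vertices lie in $B$. But any two vertices among $\{x_2,x_3,x_4\}$ are joined by a monochromatic subpath of $P$ of color $1$, which still exists in $D$. This contradicts the hypothesis that $B$ is independent by monochromatic paths, completing the proof.

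The only point requiring care is bookkeeping of where the single chord lies: because the chord contributes at most one new inward arc of color $2$ (and at most one new outward arc of color $1$), the ``majority'' argument among the three internal vertices $x_2,x_3,x_4$ works no matter what color the chord has or which vertices it touches. I do not expect any real obstacle beyond this case-checking.
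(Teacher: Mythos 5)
Your core argument is sound and it is essentially the paper's own idea: along a monochromatic stretch of the base cycle the relevant vertices are $2$-sources (or, dually, $1$-sinks) in $C$; the single chord has only one head (one tail), so it can destroy this property for at most one of them; the surviving critical vertices must lie in $B$ by Lemma \ref{d}, yet they are joined by a monochromatic subpath of the cycle, contradicting independence.

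The problem is the length convention, and it matters for how the lemma is used afterwards. In this section the paper measures a path by its number of vertices: Lemma \ref{1,2} calls $(x,y,z)$ a ``path of length three,'' and the configuration Lemma \ref{4} must exclude is $(x,y,z,w)$, i.e.\ four vertices and three arcs. You prove the statement only for $P=(x_1,\dots,x_5)$ with four arcs, because your count is restricted to the three \emph{internal} vertices of $P$, and you need three of them. That weaker statement is not enough for Lemma \ref{3uni} (two distinct monochromatic paths on three vertices that overlap in two vertices produce a monochromatic path on only four vertices) nor for the main theorem of the section, which needs that $C$ has no monochromatic path on four or more vertices. The repair stays entirely inside your argument: if $(x,y,z,w)$ has color $1$, then not only $y$ and $z$ but also $w$ has its unique inward arc of $C$ colored $1$, so all three of $y,z,w$ are $2$-sources in $C$; the chord has a single head, hence at most one of them loses this property in $D$, and the remaining two give the contradiction exactly as you argued. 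Dually, if the path has color $2$, then $x,y,z$ are $1$-sinks in $C$ and the chord has a single tail.
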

\begin{proof}
    Suppose that there is a monochromatic path of length 4 $(x,y,z,w)$ in $C$. Notice that at most one of these four vertices can be in $B$.%, since otherwise there would be a monochromatic path between two vertices in $B$.
    
     Let us suppose that $(x,y,z,w)$ has color 1. If $(y,w)$ is the chord, then $y$ and $z$ are critical since they have no inward arcs of color 2. Similarly, if $(w,y)$ is the chord, then $z$ and $w$ are critical. In any other case, at most one vertex of $\{y,z,w\}$ is contained in the chord. Thus, at least two vertices in $\{y,z,w\}$  are critical, since they do not have inward arcs of color 2.
     
      Let us suppose that $(x,y,z,w)$ has color 2. Following a similar argument, we can conclude that at least two vertices of the path are critical.  % then, if $(x,z)$ is the chord, the vertices $y$ and $z$ are critical  since they have no outward arcs of color 1. Similarly if $(z,x)$ is the chord then $x$ and $y$ are critical. In any other case, at most one vertex of $\{x,y,z\}$ is contained in the chord, so at least two vertices in $\{x,y,z\}$ are not contained in the chord and are therefore critical since they do not have outward arcs of color 1. 
     Then, in any case we have two vertices in $(x,y,z,w)$ which are critical. Thus, by Lemma \ref{d} they both belong to $B,$ which is a contradiction. Therefore, there are no monochromatic paths of length 4 in $C$.
\end{proof}

\begin{lema}\label{3uni}
    Let $D$ be a bicolored directed cycle with a chord which has a bikernel $B$. Let $C$ the base cycle of $D$. Then $C$ has at most one monochromatic path, $P$ of length 3. Moreover, if $P=(x,y,z)$ exists, the chord in $D$ must contain $y$.   %$C$ has exactly one monochromatic path of length 3, $(x,y,z)$, the chord must contain $y$.
\end{lema}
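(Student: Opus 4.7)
My plan is to prove both parts by contradiction, relying on the criticality criteria of Lemma \ref{d}, the color-change principle of Lemma \ref{1,2}, and the no-monochromatic-4-path bound of Lemma \ref{4}.

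I would tackle the second assertion first. Suppose $C$ contains a monochromatic length-3 path $P=(x,y,z)$ whose middle vertex $y$ is not an endpoint of the chord; without loss of generality let $P$ have color 1, the color 2 case being symmetric after swapping the roles of $x$ and $z$ (and of 1-sinks and 2-sources). Then the only arc of $D$ entering $y$ is the cycle arc $(x,y)$ of color 1, so $\delta_2^-(y)=0$; by Lemma \ref{d}, $y\in B$. I then analyze $z$ according to the chord's position. If the chord avoids $z$ or has $z$ as its tail, $(y,z)$ remains the only arc into $z$, so $z$ is a 2-source, $z\in B$, and the color-1 arc $(y,z)$ violates the independence of $B$. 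If the chord ends at $z$ with color 1 the same conclusion holds. If it ends at $z$ with color 2, let $z'$ be the cycle successor of $z$: when $(z,z')$ has color 1, the extended sequence $(x,y,z,z')$ is a monochromatic length-4 path in $C$, contradicting Lemma \ref{4}; when $(z,z')$ has color 2, $\delta_1^+(z)=0$ makes $z$ a 1-sink, so $z\in B$ and $(y,z)$ again breaks independence.

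For the first assertion, I would assume two distinct monochromatic length-3 paths $P_1=(x_1,y_1,z_1)$ and $P_2=(x_2,y_2,z_2)$ in $C$, of colors $c_1$ and $c_2$. Since a length-3 path in a cycle is determined by its middle vertex, $y_1\neq y_2$; the second part then forces the chord to be incident to both $y_1$ and $y_2$, so the chord is either $(y_1,y_2)$ or $(y_2,y_1)$. Up to relabeling of $P_1,P_2$, I may take the chord to be $(y_1,y_2)$ of color $c_e$, and I would run through the cases on $(c_1,c_2,c_e)$. The easier cases occur when both $y_1$ and $y_2$ turn out to be critical: this happens when $(c_1,c_2)=(1,2)$ (any $c_e$), or when $c_1=c_2=c_e$, and then the chord itself is a monochromatic $y_1y_2$-path between members of $B$, contradicting independence. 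In the remaining sub-cases exactly one of $y_1,y_2$ is critical, and I would locate the second required member of $B$ among the non-chord cycle neighbors: for instance, if $c_1=c_2=1$ with $c_e=2$ the vertex $z_1$ is a 2-source, if $c_1=c_2=2$ with $c_e=1$ the vertex $x_2$ is a 1-sink, and analogous choices work for $(c_1,c_2)=(2,1)$. In each instance the monochromatic cycle arc linking the new vertex to the already-critical $y_i$ violates the independence of $B$.

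The main obstacle will be the bookkeeping in this case analysis: each $y_i$ avoids being critical only under a precise coincidence between $c_i$ and $c_e$, so one must carefully track colors around the cycle and invoke Lemma \ref{4} to rule out the extensions that would create a length-4 monochromatic path in $C$. One must also verify that the auxiliary critical vertex (such as $z_1$ or $x_2$) is genuinely distinct from its partner $y_i$, which follows from the standing convention that the chord connects non-adjacent vertices of $C$.
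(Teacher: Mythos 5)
Your argument is correct, and it is organized differently from the paper's. You prove the ``moreover'' clause first, for an arbitrary monochromatic three-vertex path, and then exploit it: with two such paths, distinct middle vertices $y_1\neq y_2$ must both lie on the single chord, so the chord is exactly $(y_1,y_2)$ up to relabeling, and everything reduces to an eight-fold check on $(c_1,c_2,c_e)$ using only the criticality criteria of Lemma \ref{d}, Lemma \ref{4}, and independence of $B$. The paper instead attacks the two-path statement directly, splitting into the equal-color case (where it argues the paths cannot intersect and that the chord must meet $\{y_1,z_1\}$ and $\{y_2,z_2\}$) and the mixed-color case (where it invokes Lemma \ref{1,2} to place $z_1$ and $x_2$ in $B$), and only afterwards proves that the chord contains $y$; your route never needs Lemma \ref{1,2} and avoids the discussion of which of $y_i,z_i$ the chord hits, at the price of a longer but mechanical color case analysis. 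I verified the cases you leave as ``analogous'': for $(c_1,c_2,c_e)=(2,1,1)$ the vertex $z_2$ is a 2-source paired with the critical $y_2$, and for $(2,1,2)$ the vertex $x_1$ is a 1-sink paired with the critical $y_1$; in each instance the needed distinctness (e.g.\ $z_1\neq y_2$, $x_2\neq y_1$) follows either from your non-adjacency convention for the chord or, in the equal-color cases, from Lemma \ref{4}, so the sketch closes. The only shared blemish with the paper is the degenerate $3$-cycle, where the extension $(x,y,z,z')$ would not be a path; your explicit convention that a chord joins non-adjacent vertices of $C$ disposes of it.
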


\begin{proof}
    Suppose that $C$ has 2 different monochromatic paths of length 3, say $(x_1,y_1,z_1)$ and $(x_2,y_2,z_2)$.
    We work with 2 cases: when both paths have the same color and when one path is of color 1 and the other is of color 2. 
    
    \textbf{Case 1:} Assume that both paths have color 1. Notice that the paths cannot intersect since otherwise there exists a monochromatic path of length 4, which contradicts Lemma \ref{4}. If no vertex of $\{y_i,z_i\}$ is contained in the chord then both $y_i$ and $z_i$ are critical since they do not have inward arcs of color 2 for $ i=1,2$. Thus, the chord must contain a vertex in $\{y_1,z_1\}$ and a vertex in $\{y_2,z_2\}$. 
    Let $(a,b)$ be the chord.  If $b\in \{y_2,z_2\}$, then $(a,b)$ is an outward arc of $y_1$ or $z_1$, so $y_1$ and $z_1$ are critical, since they do not have inward arcs of color 2. Analogously if $b\in \{y_1,z_1\}$ then $y_2$ and $z_2$ are critical, so in both cases we get a contradiction.
    The case where both paths have color 2 is analogous.

    \textbf{Case 2:} 
    Without loss of generality, suppose that $(x_1,y_1,z_1)$ is of color 1 and $(x_2,y_2,z_2)$ is of color 2. %then they can intersect, so $z_1$ can be equal to $x_2$ and $z_2$  can be equal to $x_1$. 
    Notice that, the outward arc of $z_1$ in the cycle must be of color 2 and the inward arc of $x_2$ in the cycle must be of color 1, otherwise we would have a monochromatic path of length 4. Thus, by Lemma \ref{1,2}, $z_1$ and $x_2$ are elements of the bikernel. Now, the chord must contain $y_1$ and $y_2$ otherwise, the vertex that is not part of the chord is critical, which is a contradiction. So, assume that the chord $c$ connects $y_1$ and $y_2$ in some order. If $c$ is of color 1 then $y_1$ is critical since $y_1$ has no inward arcs of color 2; if $c$ is of color 2 then $y_2$ is critical since $y_2$ has no outward arcs of color 1. Hence, in every case we get a contradiction.\\We conclude that $C$ has at most one monochromatic path of length 3. \\
    Finally, suppose that $C$ has exactly one monochromatic path of length 3, $(x,y,z)$. If $(x,y,z)$ has color one, then the outward arc of $z$ in the cycle is of color two, and it follows that $z$ is in the bikernel. If the chord does not contain $y$, then $y$ is critical and we get a contradiction. Analogously, when $(x,y,z)$ has color two, Therefore, $y$ must be contained in the chord.
\end{proof}

\begin{teo}
Let $D$ be a bicolored directed cycle with a chord with vertices $V(D)=\{x_1, \dots, x_n\}.$ Then $D$ has a bikernel if and only if one of the following conditions holds.
\begin{enumerate}
    \item The base cycle of $D$ does not contain monochromatic paths of length greater than two, thus $n$ is even. Moreover,
    \begin{itemize}
        \item [-] for $1\leq i\leq \frac{n}{2}$ if $(x_{2i-1},x_{2i})$ has color 1 then the chord $(x_i,x_j)$ is such that at least one of the two indexes $i$ an $j$ is odd.
         \item [-] If $i$ is even, then the chord is an outward arc of $x_i$ of color 2. If $j$ is even, then the chord is an inward arc of $x_j$ of color 1.
    \end{itemize}
  
\item There exists a unique monochromatic path of length three $P=(x,y,z)$ contained in the base cycle and 
\begin{itemize}
        \item[-] if $P$ has color one, then the chord is $(v,y)$ for some $v \in V(D)$ and has color two,
        \item[-]  if $P$ has color two, then the chord is $(y,v)$ for some $v \in V(D)$ and has color one.
    \end{itemize}
\end{enumerate}
\end{teo}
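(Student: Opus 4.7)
The plan is to combine Lemmas \ref{4} and \ref{3uni} with a careful parity analysis on the position and color of the chord. Lemma \ref{4} forbids any monochromatic path of length $4$ in the base cycle $C$, and Lemma \ref{3uni} allows at most one monochromatic path of length $3$ in $C$; if such a path $P=(x,y,z)$ exists, the chord must contain $y$. Consequently any $D$ with a bikernel falls into exactly one of (Case 1) $C$ has no monochromatic length-$3$ path, or (Case 2) $C$ has a unique monochromatic length-$3$ path. I would handle both directions of the equivalence within each case.

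For the forward direction of Case 1, the absence of monochromatic length-$3$ paths forces the arcs of $C$ to strictly alternate in color, so $n$ is even; after relabeling I may assume $(x_{2i-1},x_{2i})$ has color $1$ and $(x_{2i},x_{2i+1})$ has color $2$. In the base cycle every $x_{2i}$ is simultaneously a $1$-sink and a $2$-source, hence supercritical, while every odd-indexed vertex is non-critical. Writing the chord as $(x_p,x_q)$, adding a single arc can remove at most one of the two criticality conditions at either endpoint; so if both $p$ and $q$ are even, the chord creates a monochromatic connection between two vertices forced into the bikernel, contradicting independence. The color restrictions when exactly one endpoint is even follow from the same bookkeeping: an outward chord at an even tail must be colored so as not to create a color-$1$ path from the tail to another supercritical vertex, which forces color $2$; dually, an inward chord at an even head must have color $1$.

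For the forward direction of Case 2, Lemma \ref{3uni} places the chord at $y$. Assuming $P=(x,y,z)$ has color $1$, Lemma \ref{4} forces the arc leaving $z$ in $C$ to be color $2$, so $z$ is a $2$-source of $D$ and belongs to any bikernel; since $y$ is adjacent to $z$, it cannot also lie in $B$, so the chord must remove $y$'s $2$-source status by contributing an incoming color-$2$ arc, i.e. it has the form $(v,y)$ with color $2$. The dual argument, with the colors swapped, handles the case where $P$ has color $2$.

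The converse in both cases is completed by exhibiting explicit bikernels and verifying independence, color-$1$ absorbency, and color-$2$ dominance: in Case 1 one takes the critical set $\{x_{2i}:1\le i\le n/2\}$, checked directly using the parity and color constraints on the chord; in Case 2 one takes $z$ (respectively $x$) together with the alternating critical vertices along the rest of $C$. The main obstacle is the Case-1 enumeration: each of the four parity combinations for the chord endpoints paired with each color choice interacts in a different way with the alternating coloring, and ruling out every bad configuration without overlooking a subcase requires meticulous bookkeeping.
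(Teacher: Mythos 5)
Your proposal follows essentially the same route as the paper: split into the two cases delivered by Lemmas \ref{4} and \ref{3uni}, derive the parity and color constraints on the chord from criticality of the even-indexed (resp.\ $z$ or $x$ and the alternating) vertices forced into the bikernel, and prove the converse by exhibiting exactly the same explicit bikernels and checking independence, color-1 absorbency and color-2 dominance. The argument is correct and matches the paper's proof in structure and key steps.
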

\begin{proof}
    First, we are going to prove that if one of the two conditions hold, $D$ has a bikernel. Let $C=(x_1,x_2,\cdots,x_{m})$ the base cycle.
    Suppose that $D$ satisfies condition 1, it is clear that $m=2n$.  We claim that $B=\{x_2,x_4,\cdots,x_{2n}\}$ is a bikernel of $D$. Since $C$ does not contain monochromatic paths of length two or longer, by Lemma \ref{ciclo} $C$ has a bikernel. By hypothesis, for $1\leq i\leq \frac{n}{2},$ $(x_{2i-1},x_{2i})$ has color 1, thus $x_{2i}$ is critical. Therefore,  $B$ is a bikernel for $C.$ We only need to show that $B$ is independent in $D.$ It is easy to see that in any case, the chord does not break the independence of $B$ in $D$ (see Figure \ref{casos1,2,3}). Thus $D$ has a bikernel.
  
  \begin{figure}[h!]
      \centering
\begin{tikzpicture} 
\node (e1) at (0,-0.3){$x_{2k-1}$};
\node (e2) at (0,1.3){$x_{2l+2}$};
\node (e3) at (1.5,-0.3){$x_{2k}$};
\node (e4) at (1.5,1.3){$x_{2l+1}$};
\node (e5) at (3,-0.3){$x_{2k+1}$};
\node (e6) at (3, 1.3){$x_{2l}$};
    \node[circle, draw, scale=.4] (a) at (0,0){};
    \node[circle, draw, scale=.4] (b) at (0,1){};
    \node[circle, draw, scale=.4] (c) at (1.5,0){};
    \node[circle, draw, scale=.4] (d) at (1.5,1){};
    \node[circle, draw, scale=.4] (e) at (3,0){};
    \node[circle, draw, scale=.4] (f) at (3,1){};
    \draw[red,->, thick] (c)--(d);
    \draw[blue,->, thick] (a)--(c);
    \draw[red,->, thick] (c)--(e);
    \draw[blue,->, thick] (d)--(b);
    \draw[red,->, thick] (f)--(d);

\node (e1) at (5,-0.3){$x_{2k-1}$};
\node (e2) at (5,1.3){$x_{2l+2}$};
\node (e3) at (6.5,-0.3){$x_{2k}$};
\node (e4) at (6.5,1.3){$x_{2l+1}$};
\node (e5) at (8,-0.3){$x_{2k+1}$};
\node (e6) at (8, 1.3){$x_{2l}$};
    \node[circle, draw, scale=.4] (a0) at (5,0){};
    \node[circle, draw, scale=.4] (b0) at (5,1){};
    \node[circle, draw, scale=.4] (c0) at (6.5,0){};
    \node[circle, draw, scale=.4] (d0) at (6.5,1){};
    \node[circle, draw, scale=.4] (e0) at (8,0){};
    \node[circle, draw, scale=.4] (f0) at (8,1){};
    \draw[blue,->] (d0)--(c0);
    \draw[blue,->, thick] (a0)--(c0);
    \draw[red,->, thick] (c0)--(e0);
    \draw[blue,->, thick] (d0)--(b0);
    \draw[red,->, thick] (f0)--(d0);

\node (e1) at (10,-0.3){$x_{2k}$};
\node (e2) at (10,1.3){$x_{2l+2}$};
\node (e3) at (11.5,-0.3){$x_{2k+1}$};
\node (e4) at (11.5,1.3){$x_{2l+1}$};
\node (e5) at (13,-0.3){$x_{2k+2}$};
\node (e6) at (13, 1.3){$x_{2l}$};
    \node[circle, draw, scale=.4] (a0x) at (10,0){};
    \node[circle, draw, scale=.4] (b0x) at (10,1){};
    \node[circle, draw, scale=.4] (c0x) at (11.5,0){};
    \node[circle, draw, scale=.4] (d0x) at (11.5,1){};
    \node[circle, draw, scale=.4] (e0x) at (13,0){};
    \node[circle, draw, scale=.4] (f0x) at (13,1){};
    \draw (d0x)--(c0x);
    \draw[blue,->, thick] (a0x)--(c0x);
    \draw[red,->, thick] (c0x)--(e0x);
    \draw[red,->, thick] (d0x)--(b0x);
    \draw[blue,->, thick] (f0x)--(d0x);
\end{tikzpicture} 
\caption{In each figure, we can see the cases for the endpoints of the cord and its coloring.}\label{casos1,2,3} 
 \end{figure}
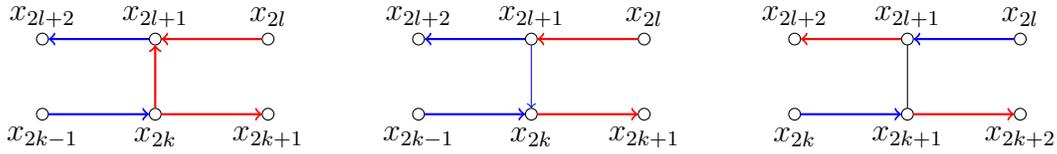

    Now, assume that condition 2 holds. It is clear that, $m=2n+1$. We may assume without loss of generality that $P=(x_1,x_2,x_3)$ and $v=x_i$ with $i>3$.
    Assume first that $(x_1,x_2,x_3)$ is of color 1. We are going to prove that $B=\{x_3,x_5,\cdots,x_{2n+1}\}$ is a bikernel for $D.$ 
Notice that since $B$ is independent in $C$, and by hypothesis the chord $(x_i,x_2)$ has color two, $B$ is also an independent set in $D.$
By Proposition \ref{traybi}, $B$ is a bikernel for $D\setminus \{x_1,x_2\}$. Since the arcs $(x_1,x_2)$, $(x_2,x_3)$ have color one, and $(x_{2n+1},x_1)$ has color two, we only need to show that there exists a monochromatic path of color two from $B$ to $x_2.$ If $i$ is odd, we are done. If $i$ is even, then $(x_{i-1},x_i)$ has color two and $x_{i-1}\in B$ thus $B$ is a bikernel for $D.$ 

Now, assume that $(x_1, x_2, x_3)$ has color two, we are going to prove that $\{x_1, x_4, x_6, \dots, x_{2n}\}$ is a bikernel for $B.$. Since $B$ is independent in $C$ and by hypothesis $(x_2,x_i)$ has color one, $B$ is independent in $D.$ By Proposition \ref{traybi}, $B$ is a bikernel for $D\setminus \{x_2,x_3\}$. Since the arcs $(x_1,x_2), (x_2, x_3)$ have color two and the arc $(x_3,x_4)$ has color one, we only need to show that there exists a monochromatic path from $x_2$ to a vertex of $B.$ If $i$ is even, we are done. If $i$ is odd, the arc $(x_i,x_{i+1})$ has color one with $x_{i+1}\in B.$ Hence, $B$ is a bikernel for $D.$
  
Suppose that $D$ has a bikernel. By lemmas \ref{4} and \ref{3uni}, $C$ does not contain any monochromatic paths of length greater than three, and contains at most one monochromatic path of length exactly three.

Assume first that $C$ contains a unique monochromatic path of length three $(x,y,z)$. Thus $m=|V(D)|$ is even and by Lemma \ref{3uni}, chord has $y$ as one of its endpoints. If $(x,y,z)$ has color one, then $z \in B$ and the chord is $(v,y)$ for some $v\in V(G)$ and has color two, since otherwise $y$ is critical. If $(x,y,z)$ has color two, then $x\in B$ and the chord is $(y,v)$ for some $v\in V(G)$ and has color one, since otherwise $y$ is critical.

    Now suppose that $C$ has no monochromatic paths of length 3, thus $m=2n$ we may assume without loss of generality that $(x_1,x_2)$ is of color 1.  By Lemma \ref{1,2},  $x_{2i}\in B$ for every $ 1\leq i \leq n$ and thus $x_{2i-1}\notin B$ for $2  \leq i \leq n$.  
    
    This means that the chord $(x_i,x_j)$ must be such that at least one of the indexes $i$ and $j$ must be odd, since otherwise we have an arc between two vertices of $B.$
     
    If $i$ is even, then $j$ is odd and $(x_i,x_j)$ must have color 2. Otherwise,  $(x_i,x_j,x_{j+1})$ is a monochromatic path of color 1 between $x_i$ and $x_{j+1}$ with $x_i,x_{j+1}\in B$. Similarly, if $j$ is even, then $i$ is odd and $(x_i,x_j)$ must be of color 1. Hence $D$ satisfies Condition 1 or 2.
\end{proof}

\section{Relationship with Category theory} \label{cat}
The concept of bikernel by monocrokmatic paths was motivated by the concept of augmentation in a double category, as we shall see in this section.
\begin{defi}\label{deficat}
    A category $\mathscr{C}$ is a pair $(\mathcal{O}b (\mathscr{C}), \mathcal{M}or (\mathscr{C}))$ where $\mathcal{O}b (\mathscr{C})$ is a class called the objects of $\C$ and a class $\mathscr{M}or (\C) $ of elements of the form $f:X \rightarrow Y$ with $X,Y \in \mathcal{O}b(\C)$ called morphism or arrows such that:
    \begin{itemize}
        \item for every object $X\in \mathcal{O}b(\C)$ there exists a morphism $1_X:X\rightarrow X$ called the identity morphism on $X,$
        \item given two arrows $f:X \rightarrow Y, g:Y\rightarrow Z$ there exists an arrow $gf: X\rightarrow Z$ and this composition is associative.
        \item given any arrow $f:X \rightarrow Y$, we have $1_yf=f$ and $f1_x=f.$
    \end{itemize}
\end{defi}
For example, we can define the category $BiGra$ whose objects are bicolored digraphs and whose morphisms are color-preserving graph morphisms.

A category is called \textit{small} if the class of objects is a set. If we consider only small categories, one can see a category as a multidigraph which is transitive and has every loop.

Formally, a (small) double category is an internal category in $\mathcal{C}at$, the category of all categories but for our purposes we are going to define a double category as follows.
\begin{defi} \label{doblesmall}
A small double category $\mathscr{D}$ is a tuple $(\mathcal{O}b (\mathscr{D}), \mathcal{H}or (\mathscr{D}), \mathcal{V}er(\mathscr{D}), \mathcal{S}q (\mathscr{D}))$ where $\mathcal{H}or (\mathscr{D})$ and $ \mathcal{V}er(\mathscr{D})$ are two sets of morphisms, $Sq(\mathscr{D})$ is set of diagrams of the form 
\begin{center}
\begin{tikzpicture}
\node (x) at (0,0){$X$};
\node (y) at (0,1){$Y$};
\node (w) at (1,0){$W$};
\node (z) at (1,1){$Z$};
\draw[->, red] (y)--(z);
\draw[->, red] (x)--(w);
\draw[->, blue] (y)--(x);
\draw[->, blue] (z)--(w);
\end{tikzpicture}
\end{center}
such that $(\mathcal{O}b (\mathscr{D}), \mathcal{H}or (\mathscr{D})), (\mathcal{O}b (\mathscr{D}), \mathcal{V}er (\mathscr{D})), (\mathcal{V}er(\mathscr{D})), \mathcal{S}q (\mathscr{D})), (\mathcal{H}or(\mathscr{D}), \mathcal{S}q (\mathscr{D}))$ form categories.
% We typically denote horizontal morphisms by $\rightarrowtail$ and vertical morphisms by $\twoheadrightarrow$. 
\end{defi}

If we see a small category as a transitive and reflexive multidigraph, a small double category is a bicolored transitive and reflexive multidigraph in which we select a set of subdigraphs which contain exactly two parallel arcs of each color which correspond to the squares and satisfy that:
\begin{itemize}
    \item the subdigraph induced by the arcs of one color $i$ is transitive and reflexive for $i=1,2$, and
    \item the digraph which has a vertices the arcs of a fixed color $i$, and as arcs the squares is also transitive and reflexive for $ i=1,2.$
\end{itemize}

\begin{defi}
    An augmentation of a double category $\mathscr{D}$ consists of a set of objects $A$ such that, for every object $d$ of $\mathscr{D}$ and $a, a' \in A$, there exists a unique morphism from  $a$  to $d$ in $\mathcal{H}or\mathscr{D}$ and a unique morphism from $d $ to $a'$ in $\mathcal{V}er\mathscr{D}$.
    %An augmentation of a double category $\mathscr{D}$ consists of a set of objects $A$ such that, for every object $d$ of $\mathscr{D}$, there exist unique morphisms from  $a$ in $A$ to $d$ in $\mathcal{H}or\mathscr{D}$ and from $d $ to $a'$ in $\mathcal{V}er\mathscr{D}$ with $a, a' \in A.$ % An augmented double category is a double category equipped with an augmentation, and a double functor between augmented double categories is augmented if it preserves the augmentation.
We say that a category is pointed if it contains an augmentation consisting of a single vertex.
\end{defi}
We can translate the definition of an augmentation to graph theory. Let $A$ be a not necessarily independent set of objects (vertices), such that its arcs are colored with two colors. \textit{An augmentation} is a subset of $A$ which is uniquely absorbent in one color and uniquely dominant in the other color. In particular, instead of a category, let us consider a two-colored digraph which is not necessarily transitive. In this case, we can consider absorbance by monochromatic paths of color two and dominance by monochromatic paths of color one. Moreover, if the digraph is not transitive, we can require independence instead of uniqueness. So we obtain the definition of a bikernel in a digraph. Thus, we can prove following theorem.
 %If we translate the definition of augmentation to graph theory, an augmentation is a set which is uniquely absorbent in one color and uniquely dominant in the other color. If we were to consider instead of a category, a digraph which is not transitive, we can consider absorbance and dominance by monochromatic paths. Moreover, if the digraph is not transitive, we can require independence instead of uniqueness to obtain the following theorem.

\begin{teo}%\thnote{tal vez falta decir que si es multidigrafica la convertimos en digrafica y decir cuales flechas corresponden a que color (o en dado caso considerar la categoria transpuesta)}
%\thnoteil{podriamos definir un functor que me manda una digrafica bicoloreada en una categoria doble al hacerla transitiva, reflexiva y agregar los cuadrados necesarios y un functor que manda una categoria doble en una digrafica bicoloreada al olvidar los cuadrados (y pegar flechas multiples si es necesario) habria que definir la categoria de las digraficas bicoloreadas.}
    Let $\mathscr{D}$ be an augmented double category. Then $\mathscr{D}$ has a bikernel when viewed as a bicolored digraph.
\end{teo}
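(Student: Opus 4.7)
The plan is to show that any single element of the augmentation $A$ yields a bikernel of $\mathscr{D}$ viewed as a bicolored digraph. Fix any $a_0 \in A$ and set $B = \{a_0\}$; independence is then automatic since $B$ is a singleton. It remains to verify absorbence in color $1$ and dominance in color $2$.

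First, I would record the color-to-morphism correspondence implicit in Definition \ref{doblesmall}: in the underlying bicolored multidigraph of $\mathscr{D}$, the horizontal morphisms in $\mathcal{H}or\mathscr{D}$ appear as the color-$2$ (red) arcs, and the vertical morphisms in $\mathcal{V}er\mathscr{D}$ appear as the color-$1$ (blue) arcs. This is immediate from the layout of the defining square in Definition \ref{doblesmall}, where horizontal arrows are drawn in red and vertical ones in blue.

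Next, for any object $v \in V(\mathscr{D}) \setminus B$, I would apply the augmentation hypothesis twice. Taking $d = v$ and $a' = a_0$ in the augmentation axiom yields a (unique) vertical morphism $v \to a_0$, which is a color-$1$ arc from $v$ to the element of $B$, hence a monochromatic color-$1$ path of length one; this shows $B$ is absorbent with color $1$. Taking $d = v$ and $a = a_0$ instead produces a (unique) horizontal morphism $a_0 \to v$, a color-$2$ arc from $B$ to $v$, showing $B$ is dominant with color $2$.

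Together these three verifications give the conditions of Definition \ref{bikernel}, so $B$ is a bikernel of $\mathscr{D}$. There is no substantive obstacle in the argument: once the color-to-morphism correspondence is in place, the bikernel conditions unpack directly from the augmentation axioms, which essentially say that every element of $A$ is ``initial'' in $\mathcal{H}or\mathscr{D}$ and ``terminal'' in $\mathcal{V}er\mathscr{D}$, so dominance in color $2$ and absorbence in color $1$ come for free, while independence is trivial for a one-element set.
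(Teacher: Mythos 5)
Your proof is correct, but it takes a genuinely different route from the paper's. The paper exhibits the \emph{whole} augmentation $\mathcal{A}$ as the bikernel: absorbance in color 1 and dominance in color 2 are then immediate from the augmentation axiom, and all of the work goes into independence by monochromatic paths, which the paper extracts from the double-category structure — transitivity of each color class collapses a hypothetical monochromatic path between two augmentation objects into a single arc, and reflexivity together with the uniqueness clause of the augmentation turns that arc into a contradiction. You instead take the singleton $\{a_0\}$ for one chosen $a_0$ in the augmentation, so independence is vacuous and you only need the existence part of the augmentation axiom (a vertical, color-1 morphism $v \to a_0$ and a horizontal, color-2 morphism $a_0 \to v$ for every object $v \neq a_0$); transitivity, reflexivity and uniqueness are never used. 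This is shorter and more elementary, and it does prove the theorem as stated; what it does not establish is the fact the paper is really after — that the augmentation itself is independent by monochromatic paths and hence is a bikernel, which is the conceptual link motivating the definition. Two small remarks: both arguments implicitly assume the augmentation is non-empty (a bikernel must be non-empty by Definition \ref{bikernel}), and your explicit identification of horizontal morphisms with color 2 and vertical morphisms with color 1 is the correct reading of Definition \ref{doblesmall} and of the paper's own proof, even though the informal paragraph preceding the theorem states the opposite convention.
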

\begin{proof}
    We are going to prove that the augmentation $\mathcal{A}$ is a bikernel. Since $\mathcal{A}$ is both dominant in color two and absorbent in color one, we only need to show that $\mathcal{A}$ is independent by monochromatic paths. Thus assume by contradiction that there exists a monochromatic path of color $i$ between two elements $x,y$ in $\mathcal{A}$. Since the subgraph induced by the arcs of color $i$ is transitive, we have that the arc $(x,y)$ exists. Since $\mathscr{D}$ is reflexive in both colors, we have that both $x$ and $y$ are both dominated and absorbed by themselves, thus the arc $(x,y)$ contradicts the uniqueness condition of the augmentation $\mathcal{A}.$ Hence $\mathcal{A}$ is a bikernel.
\end{proof}

\end{document}